  \def\Gamma{Gamma}%
  \def\lambda{lambda}%
  \def\delta{delta}%
\newtheorem{theorem}{Theorem}[section]
\newtheorem{corollary}[theorem]{Corollary}
\newtheorem{proposition}[theorem]{Proposition}
\newtheorem{lemma}[theorem]{Lemma}
\newtheorem*{theorem*}{Theorem}
\theoremstyle{remark}
\theoremstyle{definition}
\newtheorem{example}[theorem]{Example}
\newtheorem*{definition*}{Definition}
\newcommand{\C}{\mathbb{C}}
\newcommand{\Z}{\mathbb{Z}}
\newcommand{\Q}{\mathbb{Q}}
\newcommand{\ox}{\otimes}
\DeclareMathOperator{\Pf}{Pf}
\newcommand{\bibDate}[1]{}
\begin{document}
\title[Deformation of the Bernstein Operator]{Constructing Hall-Littlewood Functions via a Deformation of the Bernstein Operator}

\author{John~Graf}
\address{Department of Mathematical Sciences, University of Delaware, Newark, DE 19716, USA}
\email[John~Graf]{jrgraf@udel.edu}

    
    \begin{abstract}
        The Bernstein operator $\mathbf{B}_n$ acts on a Schur function $S_\lambda$ by appending a part to the index, i.e., $\mathbf{B}_n S_\lambda=S_{(n,\lambda)}$. This provides a method of constructing the vertex operator representation of Schur functions since its homogeneous components are essentially just these Bernstein operators. Meanwhile, the Hall-Littlewood functions are an important generalization of the Schur functions, and they also have a vertex operator representation due to Jing. In this paper, we construct a $t$-analogue of the Bernstein operator, which allows for an explicit construction of the Jing operator. We show that the usual involution $\omega$ is fundamental to this construction, revealing further combinatorial structure. As an application, we use this vertex operator to prove stability of certain structure coefficients, including the Hall polynomials.
    \end{abstract}

    \maketitle
    

    \section{Introduction}

    The Schur functions $S_\lambda(X)$, the homogeneous symmetric functions $h_\lambda(X)$, and the elementary symmetric functions $e_\lambda(X)$, are important bases for the ring $\Lambda$ of symmetric functions. When the partition $\lambda$ is a row shape $(n)$ or a column shape $(1^n)$, then the Schur function $S_\lambda$ specializes to the homogeneous and elementary symmetric functions, respectively,
    \[S_{(n)}(X)=h_n(X),\qquad \text{and}\qquad S_{(1^n)}(X)=e_n(X).\]
    This special relationship provides a method to construct the Schur functions via Bernstein operators. Namely, the Bernstein operator
    \begin{equation}\label{eqn: bernstein}
        \mathbf{B}_n=\sum_{i\geq0}(-1)^i h_{n+i}e_i^\perp
    \end{equation}
    is a sum involving both the homogeneous and elementary symmetric functions, where $e_i^\perp$ denotes the adjoint of multiplication by $e_i$ in the Hall inner product. The Bernstein operator $\mathbf{B}_n$ appends a row to the Schur function, i.e., $\mathbf{B}_n(S_\lambda)=S_{(n,\lambda)}$ where $(n,\lambda)=(n,\lambda_1,\lambda_2,\ldots)$. And so, it follows by iteration that $\mathbf{B}_{\lambda_1}\cdots\mathbf{B}_{\lambda_n}(1)=S_\lambda$ \cite[p.~69]{zelevinsky:1981}. There are similar `creation operators' for different bases that add either rows or columns to a Young diagram \cite{zabrocki:2001}.

    Another method of constructing the Schur functions involves vertex operators, which are certain infinite-order differential operators used for the construction of representations of Kac-Moody algebras \cite{kac:1990}. The Schur vertex operator
    \[Y(z)=\exp\left(\sum_{i\geq1}z^ix_i\right)\exp\left(\sum_{i\geq1}-\frac{z^{-i}}{i}\frac{\partial}{\partial x_i}\right)\]
    constructs the generating function for the Schur functions \cite[p.~317]{kac:1990}, i.e., 
    \[Y(z_1)\cdots Y(z_n)(1)=\sum_{\lambda\in\Z^n}S_\lambda z^\lambda.\]
    Since Bernstein operators $\mathbf{B}_n$ are essentially its homogeneous components, $Y(z)=\sum_{n\in\Z}\mathbf{B}_nz^n$, one may construct the Schur vertex operator using Bernstein operators \cite[p.~95]{macdonald:1995}. Hence, Bernstein operators can help provide a more combinatorial interpretation of the vertex operator representation. 

    Meanwhile, the Hall-Littlewood functions $Q_\lambda(X;t)$ provide an important basis of the ring $\Lambda(t)$ of symmetric functions with coefficients in $\Q(t)$. These functions specialize to the Schur functions at $t=0$, and they also have a vertex operator representation \cite{jing:1991b}. There have been several different combinatorial constructions of the Hall-Littlewood vertex operator, and related generalizations \cite{garsia:1992,macdonald:1995,shimozono-zabrocki:2001,zabrocki:2000a,zabrocki:2000b}. It is desirable to find a combinatorial interpretation of this Hall-Littlewood vertex operator via a $t$-analogue of Bernstein operators. There have been several different methods used to construct such an operator. Notably, a deformation of the Bernstein operator of the form
    \[\widetilde{\mathbf{B}}_n=\sum_{i\geq0}t^i\mathbf{B}_{n+i}h_i^\perp\]
    can be used as a method of constructing the Hall-Littlewood functions \cite{berg-bergeron-aliola-serrano-zabrocki:2014}. In this paper, we construct an operator of the form
    \[\sum_{i\geq0}(-1)^i u_{n+i}v_i^\perp\]
    that appends a row to the index of $Q_\lambda(X;t)$, where $u_\lambda,v_\lambda$ are two bases of $\Lambda(t)$ such that $v_\lambda=\omega(u_\lambda)$ under the usual involution $\omega:\Lambda\to\Lambda$. This strengthens the approach given in \cite[p.~236-238]{macdonald:1995}, which does not fully realize this version of a $t$-analogue of the Bernstein operator. 
    
    Namely, we show that the involution $\omega$ plays an essential role in the Jing operator, which carries important combinatorial implications. Indeed, creation operator constructions are often expressed in terms of partition conjugates. For example, the Bernstein operator may be written
    \[\mathbf{B}_n=\sum_{i\geq0}(-1)^i S_{(n+i)}S_{(n)'}^\perp\]
    because the conjugate of a row $\lambda=(n)$ is the column $\lambda'=(1^n)$. Since $\omega(S_\lambda)=S_{\lambda'}$, we essentially show that this operator is perhaps best understood in the form
    \[\mathbf{B}_n=\sum_{i\geq0}(-1)^i S_{(n+i)}\omega(S_{(n)})^\perp.\]
    Hence, it may be more useful to use the involution $\omega$ to generalize similar creation operators to Hall-Littlewood functions or Macdonald polynomials, rather than utilizing conjugates.

    We start by explicitly constructing a basis $B_\lambda(X;t)$ that is the image of the Hall-Littlewood functions under the usual involution $\omega:\Lambda\to\Lambda$, i.e., $B_\lambda=\omega (Q_\lambda)$. Both bases generalize the homogeneous and elementary symmetric functions,
    \begin{align*}
        Q_{(n)}(X;0)=B_{(1^n)}(X;0)&=h_n(X),\\
        B_{(n)}(X;0)=Q_{(1^n)}(X;0)&=e_n(X).
    \end{align*}
    Consequently, these bases share many dual properties, and in particular they both specialize to the Schur functions,
    \begin{align*}
        Q_\lambda(X;0)=S_\lambda(X),\qquad\text{and}\qquad
        B_\lambda(X;0)=S_{\lambda'}(X).
    \end{align*}
    The relationship between $Q_\lambda$ and $B_\lambda$ can be summarized with the following commuting diagram,
    \[\begin{tikzcd}
    	{Q_\lambda} & {B_\lambda} \\
    	{S_\lambda} & {S_{\lambda'}}
    	\arrow["\omega", tail reversed, from=1-1, to=1-2]
    	\arrow["{t=0}"', from=1-1, to=2-1]
    	\arrow["{t=0}", from=1-2, to=2-2]
    	\arrow["\omega", tail reversed, from=2-1, to=2-2]
    \end{tikzcd}\]
    
    By direct construction of the Jing operator, we show that the operator
    \[\sum_{i\geq0}(-1)^i q_{n+i}b_i^\perp\]
    is the desired $t$-analogue of the Bernstein operator, where $q_n=Q_{(n)}$ and $b_n=B_{(n)}$. In fact, our method also creates the vertex operator for the $B_\lambda$'s, with a corresponding dual operator
    \[\sum_{i\geq0}(-1)^i b_{n+i}q_i^\perp.\]
    We also show that these methods may be used to create vertex operators of new families of functions.
    

    Next, one often studies sequences of symmetric functions where the first part of an index is increasing, and vertex operators can be used to prove that these sequences stabilize. For example, the vertex operator method has been used to prove plethysm stability theorems for Schur functions \cite{carre-thibon:1992,scharf-thibon:1994} and Schur's $Q$-functions \cite{graf-jing:2024b}. Now, we can extend these stability methods to Hall-Littlewood functions. We use this method to prove the stability of certain skew structure coefficients $f_{\mu\nu}^\lambda(t)=(Q_{\lambda/\mu},Q_\nu)$, where $Q_{\lambda/\mu}=\sum_\nu f_{\mu\nu}^\lambda(t)Q_\nu$. By setting $t=0$, this implies the stability of the Littlewood-Richardson coefficients.

    Moreover, the coefficient $f_{\mu\nu}^\lambda(t)$ is proportional to the Hall polynomial $g_{\mu\nu}^\lambda(t)$. The Hall polynomial arises in group theory since $g_{\mu\nu}^\lambda(p)$ gives the number of subgroups $B$ of type $\nu$ of a finite abelian $p$-group $G$ of type $\lambda$ such that the quotient group $G/B$ has type $\mu$ \cite{morris:1962}. We show that, as a consequence of the stability of the skew coefficients, the Hall polynomials also stabilize.

    \section{Preliminaries}\label{section: preliminaries}

    \subsection{Compositions and Partitions}

    The many families of symmetric functions are each indexed by integer partitions. However, vertex operator constructions allow one to consider indices that are compositions with negative parts.

    A \emph{composition} is a sequence of integers $\lambda=(\lambda_1,\ldots,\lambda_n)\in\Z^n$. It is a \emph{partition} if its \emph{parts} satisfy $\lambda_1\geq\lambda_2\geq\cdots\geq\lambda_n\geq0$. For two compositions $\lambda\in\Z^m,\mu\in\Z^n$, denote $(\lambda,\mu):=(\lambda_1,\ldots,\lambda_m,\mu_1,\ldots,\mu_n)\in\Z^{m+n}$. We may identify two compositions $\lambda,\mu$ if they differ only by a finite sequence of trailing $0$'s, i.e., if $\mu=(\lambda,0)$. For a partition $\lambda$, let $\lambda'$ denote its \emph{conjugate}, let its \emph{length} $\ell(\lambda)$ be the number of nonzero parts, and let its \emph{weight} $|\lambda|$ be the sum of its parts.

    For $i<j$, we define the \emph{raising operator} $R_{ij}$ to act on a composition $\lambda$ by
    \[R_{ij}\lambda:=(\lambda_1,\ldots,\lambda_i+1,\ldots,\lambda_j-1,\ldots,\lambda_n).\]
    If $u_\lambda$ is a symmetric function indexed by a composition $\lambda$, then a raising operator acts on $u_\lambda$ by $R_{ij}u_\lambda:=u_{R_{ij}\lambda}$.

    \subsection{The ring $\Lambda(t)$}

    We will mainly use the notation of \cite{macdonald:1995}, but with the plethystic notation of \cite{lascoux:2003}. The majority of the results in this chapter can be found in these two sources.
    
    Let $\Lambda$ denote the ring of symmetric functions in the \emph{alphabet} of variables $X=\{x_1,x_2,\ldots\}$, with coefficients in $\Q$. Let $\Lambda(t):=\Lambda\ox\Q(t)$ be the ring of symmetric functions with coefficients in $\Q(t)$, where $t$ is a parameter. 

    For our purposes, it is most convenient to define the Hall-Littlewood functions $Q_\lambda$ via their generating function, and the functions $B_\lambda$ are constructed in an analogous manner. First, define the functions $q_n\in\Lambda(t)$ by the generating function
    \begin{equation}
        \alpha_z:=\prod_{x\in X}\frac{1-txz}{1-xz}=\sum_{n\in\Z}q_n(X;t)z^n.
    \end{equation}
    Define the functions $b_n\in\Lambda(t)$ by
    \begin{equation}
        \beta_z:=\prod_{x\in X}\frac{1+xz}{1+txz}=\sum_{n\in\Z}b_n(X;t)z^n.
    \end{equation}
    And so, we have
    \begin{equation}\label{eqn: alpha_z=beta_-z^-1}
        \alpha_z\beta_{-z}=1,
    \end{equation}
    which generalizes the fundamental identity \eqref{eqn: sigma lambda = 1} in the ring $\Lambda$.

    For any composition $\lambda$, we define the \emph{Hall-Littlewood function} $Q_\lambda(X;t)$ to be the coefficient of $z^\lambda:=z_1^{\lambda_1}z_2^{\lambda_2}\cdots$ in
    \begin{equation}
        \alpha_{z_1,z_2,\ldots}:=\prod_{i\geq1}\alpha_{z_i}\prod_{i<j}\beta_{-z_j}[1/z_i]=\prod_{i\geq1}\alpha_{z_i}\prod_{i<j}\frac{1-z_i^{-1}z_j}{1-tz_i^{-1}z_j},
    \end{equation}
    where $\beta_{-z_j}[1/z_i]=\frac{1-z_i^{-1}z_j}{1-tz_i^{-1}z_j}$ is written using plethystic notation (see section \ref{section: plethysm definitions} for more details on plethysm). We note that this generating function may be constructed with methods independent of those described later in this article \cite[p.~211]{macdonald:1995}. Similarly, define $B_\lambda(X;t)$ to be the coefficient of $z^\lambda$ in
    \begin{equation}
        \beta_{z_1,z_2,\ldots}:=\prod_{i\geq1}\beta_{z_i}\prod_{i<j}\beta_{-z_j}[1/z_i]=\prod_{i\geq1}\beta_{z_i}\prod_{i<j}\frac{1-z_i^{-1}z_j}{1-tz_i^{-1}z_j}.
    \end{equation}
    For any composition $\lambda$, define $q_\lambda:=q_{\lambda_1}q_{\lambda_2}\cdots$ and $b_\lambda:=b_{\lambda_1}b_{\lambda_2}\cdots$. It follows that
    \begin{equation}\label{eqn: raising operator formulas}
        Q_\lambda=\prod_{i<j}\frac{1-R_{ij}}{1-tR_{ij}}q_\lambda,\qquad\text{and}\qquad B_\lambda=\prod_{i<j}\frac{1-R_{ij}}{1-tR_{ij}}b_\lambda.
    \end{equation}

    The families of functions $\{Q_\lambda\}$ and $\{q_\lambda\}$, indexed by partitions $\lambda$, form bases of $\Lambda(t)$, and we will soon show that $\{B_\lambda\}$ and $\{b_\lambda\}$ are bases as well. We note, however, that our definitions of these families of functions are valid for all compositions $\lambda$.

    \subsection{Specializations}

    When $t=0$, the functions $q_n$ and $b_n$ specialize to the homogeneous and elementary symmetric functions, respectively. Define the \emph{homogeneous symmetric functions} $h_n\in\Lambda$ by
    \begin{equation}\label{eqn: sigma_z}
        \sigma_z:=\prod_{x\in X}\frac{1}{1-xz}=\sum_{n\in\Z}h_n(X)z^n
    \end{equation}
    and the \emph{elementary symmetric functions} $e_n\in\Lambda$ by
    \begin{equation}
        \lambda_z:=\prod_{x\in X}(1+xz)=\sum_{n\in\Z}e_n(X)z^n
    \end{equation}
    so that $\sigma_z(X)=\alpha_z(X;0)$ and $\lambda_z(X)=\beta_z(X;0)$. It follows that
    \begin{equation}\label{eqn: sigma lambda = 1}
        \sigma_z\lambda_{-z}=1.
    \end{equation}
    
    For any composition $\lambda\in\Z^n$, we define the Schur function $S_\lambda\in\Lambda$ by the Jacobi-Trudi identity
    \begin{equation*}
        S_\lambda(X):=\det(h_{\lambda_i-i+j})=\det(e_{\lambda_i'-i+j}).
    \end{equation*}
    
    When $t=-1$, the Hall-Littlewood functions $Q_\lambda$ specialize to Schur's $Q$-functions. Define the functions $q_n'\in\Lambda$ by
    \begin{equation*}
        \kappa_z:=\prod_{x\in X}\frac{1+xz}{1-xz}=\sum_{n\in\Z}q_n'(X)z^n
    \end{equation*}
    so that $\kappa_z(X)=\sigma_z(X)\lambda_z(X)=\alpha_z(X;-1)=\beta_z(X;-1)$. For any composition $\lambda\in\Z^{2n}$ (where we may set $\lambda_{2n}=0$), define \emph{Schur's $Q$-function} $Q'_\lambda\in\Lambda$ by
    \begin{equation*}
        Q'_\lambda(X):=\Pf M(\lambda),
    \end{equation*}
    where $M(\lambda)$ is the skew-symmetric matrix with $(i,j)$-entry
    \begin{equation*}
        M(\lambda)_{ij}:=
        \begin{cases}
            q'_{\lambda_i}q'_{\lambda_j}+2\sum_{k=1}^{\lambda_j}(-1)^kq'_{\lambda_i+k}q'_{\lambda_j-k}&\text{if }j>i,\\
            0&\text{if }j=i,\\
            -\left(q'_{\lambda_j}q'_{\lambda_i}+2\sum_{k=1}^{\lambda_i}(-1)^kq'_{\lambda_j+k}q'_{\lambda_i-k}\right)&\text{if }j<i,
        \end{cases}
    \end{equation*}
    and its \emph{Pfaffian} satisfies $\det M(\lambda)=(\Pf M(\lambda))^2$. Schur's $Q$-functions, indexed by strict partitions, form a basis of the subring $\Gamma:=\Q[p_1,p_3,p_5,\ldots]$, where $p_n(X):=\sum_{x\in X}x^n$ is the $n$th \emph{power sum} symmetric function.

    Additionally, for any partition $\lambda$, we define the \emph{monomial symmetric function} $m_\lambda$ by
    \[m_\lambda:=\sum_\mu x^\mu,\]
    where the sum ranges over all distinct permutations $\mu$ of $(\lambda_1,\ldots,\lambda_n,0,0,\ldots)$.

    Let $\omega:\Lambda\to\Lambda$ be the usual involution $\omega(h_n)=e_n$, and extend it to $\Lambda(t)$ by linearity. Then it follows that
    \begin{align*}
        \omega(\alpha_z)&=\omega\left(\sigma_z\lambda_{-tz}\right)
        =\lambda_z\sigma_{-tz}
        =\beta_z,
    \end{align*}
    and so $\omega(q_n)=b_n$. It follows from the raising operator formulas \eqref{eqn: raising operator formulas} that $\omega(Q_\lambda)=B_\lambda$. Since the $q_n$'s are algebraically independent, then the same is true for the $b_n$'s. Similarly, since $\{Q_\lambda\}$ and $\{q_\lambda\}$ form bases of $\Lambda(t)$, each indexed by partitions $\lambda$, then  $\{B_\lambda\}$ and $\{b_\lambda\}$ are bases too. Hence, we have
    \[\Lambda(t)=\Q(t)[q_1,q_2,q_3,\ldots]=\Q(t)[b_1,b_2,b_3,\ldots].\]

    \subsection{Inner Product, Skew Functions, and Adjoints}

    We define an inner product $(\cdot,\cdot)$ on $\Lambda(t)$ by
    \[(q_\lambda(X;t),m_\mu(X))=\delta_{\lambda\mu},\]
    for two partitions $\lambda,\mu$. Since $\omega$ is an isometry, it follows that
    \[(b_\lambda(X;t),f_\mu(X))=\delta_{\lambda\mu},\]
    where $f_\mu:=\omega(m_\mu)$ are the \emph{forgotten symmetric functions}. Additionally, for any partitions $\lambda$ and $\mu$ we have
    \begin{align*}
        (Q_\lambda,Q_\mu)&=c_\lambda(t)\delta_{\lambda\mu},\\
        (B_\lambda,B_\mu)&=c_\lambda(t)\delta_{\lambda\mu},
    \end{align*}
    where $c_\lambda(t):=\prod_{i\geq1}\prod_{j=1}^{k_i}(1-t^j)$ for $\lambda=(1^{k_1},2^{k_2},\ldots)$.

    For any partitions $\lambda$ and $\mu$, we define the skew functions $Q_{\lambda/\mu}$ and $B_{\lambda/\mu}$ by
    \begin{align*}
        (Q_{\lambda/\mu},Q_\nu)&=(Q_\lambda,c_\mu(t)Q_\mu Q_\nu),\\
        (B_{\lambda/\mu},B_\nu)&=(B_\lambda,c_\mu(t)B_\mu B_\nu),
    \end{align*}
    for all partitions $\nu$. It follows that $\omega(Q_{\lambda/\mu})=B_{\lambda/\mu}$.

    For a function $F\in\Lambda(t)$, let $F^\perp$ denote the adjoint of multiplication by $F$ with respect to the inner product $(\cdot,\cdot)$, 
    \begin{align*}
        (F^\perp G,H)=(G,FH),\quad\text{for all } G,H\in\Lambda(t).
    \end{align*}
    For a power series $F=\sum_{n\in\Z}F_nz^n$, denote $F^\perp:=\sum_{n\in\Z}z^nF_n^\perp$.

    It follows that $Q_\mu^\perp Q_\lambda=c_\mu(t)Q_{\lambda/\mu}$ and $B_\mu^\perp B_\lambda=c_\mu(t)B_{\lambda/\mu}$. In particular, we will make use of the identity
    \begin{equation}\label{eqn: q_n^perp Q_lambda}
        q_n^\perp Q_\lambda=
            \begin{cases}
                Q_\lambda&\text{if }n=0,\\
                (1-t)Q_{\lambda/(n)}&\text{if }n>0.
            \end{cases}
    \end{equation}

    \subsection{Plethysm}\label{section: plethysm definitions}

    In the $\lambda$-ring setting (see \cite{lascoux:2003}), plethysm is viewed as the action of a symmetric function on a polynomial in $\C[Y]$, where $Y$ is some alphabet that may contain $X$. In particular, we will often use an alphabet $Y$ containing any of the variables $X,t,z,z_1,z_2,\ldots$. For $P=\sum_\mu c_\mu y^\mu\in\C[Y]$, we define the plethysm $h_n[P]$ by the generating function
    \begin{equation*}
        \sigma_z[P]:=\prod_{\mu}\left(\frac{1}{1-zy^\mu}\right)^{c_\mu}=\sum_{n\in\Z}h_n[P]z^n.
    \end{equation*}
    We can write any $F\in\Lambda(t)$ as a polynomial in the $h_n$'s, say $F(A;t)=\mathcal{F}(h_1,h_2,\ldots)$. Then we define the plethysm $F[P]:=\mathcal{F}(h_1[P],h_2[P],\ldots)$. Note that we will not specialize $t$ when using plethystic notation, so we will write $F[X]$ to mean $F(X;t)$. 

    It follows from the identity $\sigma_z\lambda_{-z}=1$ that we can compute $e_n[P]$ by 
    \begin{equation*}
        \lambda_z[P]:=\prod_{\mu}(1+zy^\mu)^{c_\mu}=\sum_{n\in\Z}e_n[P]z^n.
    \end{equation*}
    Since $\alpha_z=\sigma_z\lambda_{-tz}$, we have
    \begin{equation}
        \alpha_z[P]:=\prod_{\mu}\left(\frac{1-tyz}{1-yz}\right)^{c_\mu}=\sum_{n\in\Z}q_n[P]z^n.
    \end{equation}
    Similarly, we get
    \begin{equation}
        \beta_z[P]:=\prod_{\mu}\left(\frac{1+yz}{1+tyz}\right)^{c_\mu}=\sum_{n\in\Z}b_n[P]z^n.
    \end{equation}
    It follows that $Q_\lambda[P]$ is the coefficient of $z^\lambda$ in
    \begin{equation*}
        \alpha_{z_1,z_2,\ldots}[P]:=\prod_{i\geq1}\alpha_{z_i}[P]\prod_{i<j}\beta_{-z_j}[1/z_i],
    \end{equation*}
    and similarly $B_\lambda[P]$ is the coefficient of $z^\lambda$ in $\beta_{z_1,z_2,\ldots}[P]$.

    Since $F[X]=F[x_1+x_2+\cdots]$, we identify an alphabet with the sum of its elements. The sum $X+Y$ of two alphabets is defined to be the \emph{disjoint} union of $X$ and $Y$. It follows that $kX=\underbrace{X+\cdots+X}_{k\text{ times}}$ for all integers $k\geq0$, and in particular
    \[\sigma_z[X+Y]=\sigma_z[X]\sigma_z[Y].\]
    We extend this property to all $k\in\C$ via the identity $\sigma_z[kX]=(\sigma_z[X])^k$, and so $\sigma_z[-X]=(\sigma_z[X])^{-1}=\lambda_{-z}[X]$. Therefore, we have
    \begin{equation*}
        \alpha_z[kX]=(\alpha_z[X])^k,\qquad\text{and}\qquad\beta_z[kX]=(\beta_z[X])^k,
    \end{equation*}
    for all $k\in\C$, and hence
    \begin{align*}
        \alpha_z[-X]=\beta_{-z}[X],\qquad\text{and}\qquad\beta_z[-X]=\alpha_{-z}[X].
    \end{align*}
    It follows that $q_n[-X]=(-1)^nb_n[X]$. Thus, if $F\in\Lambda(t)$ is homogeneous of degree $n$, then we have
    \begin{equation*}
        F[-X]=(-1)^n(\omega F)[X].
    \end{equation*}
    Moreover, we have the useful identities \cite[p.~228]{macdonald:1995}
    \begin{align*}
        Q_\lambda[X+Y]&=\sum_{\mu}Q_{\lambda/\mu}[X]Q_\mu[Y],\\
        B_\lambda[X+Y]&=\sum_{\mu}B_{\lambda/\mu}[X]B_\mu[Y].
    \end{align*}

    \section{Properties of $B_\lambda$}

    Although the Hall-Littlewood functions $Q_\lambda(X;t)$ are widely studied, the functions $B_\lambda(X;t)$ are not. Hence, in this section we develop some properties of this basis that will be useful in later sections.

    \subsection{$B_\lambda$ Identities}
    
    First, it is well-known that setting $t=0$ specializes the Hall-Littlewood functions to the Schur functions, and setting $t=-1$ results in Schur's $Q$-functions. Using the involution $\omega$, we see that $B_\lambda$ specializes as follows.
    
    \begin{proposition}
        We have $Q_\lambda(X;0)=B_{\lambda'}(X;0)$, and in particular
        \begin{align*}
            Q_\lambda(X;0)&=S_\lambda(X),&
            B_\lambda(X;0)&=S_{\lambda'}(X),\\
            Q_\lambda(X;-1)&=Q'_\lambda(X),&
            B_\lambda(X;-1)&=Q'_\lambda(X).
        \end{align*}
    \end{proposition}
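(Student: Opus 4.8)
The plan is to establish the two ``$Q_\lambda$'' specializations directly from the generating‑function definition of $Q_\lambda$, and then to obtain the ``$B_\lambda$'' statements for free from the already‑proved identity $\omega(Q_\lambda)=B_\lambda$, together with $\omega(S_\lambda)=S_{\lambda'}$ and the observation that the two multivariate generating functions collapse onto one another at $t=-1$.

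First I would treat $t=0$. Substituting $t=0$ into $\alpha_z$ gives $\alpha_z(X;0)=\sigma_z(X)$, hence $q_n(X;0)=h_n(X)$, and the raising‑operator formula \eqref{eqn: raising operator formulas} specializes to $Q_\lambda(X;0)=\prod_{i<j}(1-R_{ij})\,h_\lambda$. I would then invoke the standard identity $\prod_{i<j}(1-R_{ij})\,u_\lambda=\det(u_{\lambda_i-i+j})$, valid for any indexed family $(u_n)_{n\in\Z}$ (equivalently, extract the coefficient of $z^\lambda$ from $\prod_i\sigma_{z_i}\prod_{i<j}(1-z_i^{-1}z_j)$ by writing $\prod_{i<j}(1-z_i^{-1}z_j)$ as a normalized Vandermonde and expanding it over $S_n$); this determinant is exactly the Jacobi--Trudi expression $\det(h_{\lambda_i-i+j})=S_\lambda$. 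Since $\omega$ is $\Q(t)$‑linear, it commutes with the substitution $t=0$, so $B_\lambda(X;0)=\omega\bigl(Q_\lambda(X;0)\bigr)=\omega(S_\lambda)=S_{\lambda'}$; replacing $\lambda$ by $\lambda'$ and using $(\lambda')'=\lambda$ then gives $B_{\lambda'}(X;0)=S_\lambda=Q_\lambda(X;0)$, which is the ``in particular'' identity $Q_\lambda(X;0)=B_{\lambda'}(X;0)$.

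Next I would treat $t=-1$. Here $\alpha_z(X;-1)=\prod_{x\in X}\frac{1+xz}{1-xz}=\kappa_z(X)=\beta_z(X;-1)$, so $\alpha_{z_1,z_2,\ldots}$ and $\beta_{z_1,z_2,\ldots}$ are literally equal at $t=-1$, whence $Q_\lambda(X;-1)=B_\lambda(X;-1)$ for every composition $\lambda$; it remains to identify this common value with $Q'_\lambda$. After padding $\lambda$ to even length $2n$, the factor $\beta_{-z_j}[1/z_i]$ becomes $\frac{1-z_i^{-1}z_j}{1+z_i^{-1}z_j}=\frac{z_i-z_j}{z_i+z_j}$, so $Q_\lambda(X;-1)$ is the coefficient of $z^\lambda$ in $\prod_{i=1}^{2n}\kappa_{z_i}(X)\prod_{i<j}\frac{z_i-z_j}{z_i+z_j}$. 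I would absorb the factors $\kappa_{z_i}$ into Schur's Pfaffian identity $\prod_{i<j}\frac{z_i-z_j}{z_i+z_j}=\Pf\bigl(\tfrac{z_i-z_j}{z_i+z_j}\bigr)_{1\le i,j\le 2n}$ to write the whole product as $\Pf(N)$ with $N_{ij}=\kappa_{z_i}(X)\kappa_{z_j}(X)\frac{z_i-z_j}{z_i+z_j}$ for $i<j$; since each entry of $N$ involves only two of the variables and a perfect matching uses each index exactly once, extracting the coefficient of $z^\lambda$ proceeds termwise, giving $\Pf\bigl([z_i^{\lambda_i}z_j^{\lambda_j}]N_{ij}\bigr)$. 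Expanding $\frac{z_i-z_j}{z_i+z_j}=1+2\sum_{m\ge1}(-1)^m z_i^{-m}z_j^m$ and $\kappa_z=\sum_n q'_n z^n$, one gets $[z_i^{\lambda_i}z_j^{\lambda_j}]N_{ij}=q'_{\lambda_i}q'_{\lambda_j}+2\sum_{m=1}^{\lambda_j}(-1)^m q'_{\lambda_i+m}q'_{\lambda_j-m}=M(\lambda)_{ij}$, so $Q_\lambda(X;-1)=\Pf M(\lambda)=Q'_\lambda(X)$ and hence $B_\lambda(X;-1)=Q_\lambda(X;-1)=Q'_\lambda(X)$. (As a consistency check, $\omega(\kappa_z)=\omega(\sigma_z)\omega(\lambda_z)=\kappa_z$ forces $\omega(Q'_\lambda)=Q'_\lambda$, matching $B_\lambda(X;-1)=\omega(Q_\lambda(X;-1))$.) Alternatively, $Q_\lambda(X;-1)=Q'_\lambda$ is classical and may be cited from \cite[III.8]{macdonald:1995}.

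The substitutions, the appeal to Jacobi--Trudi, and the transfer through $\omega$ are routine. The one place requiring care is the last computation: matching the two‑variable expansion of $\kappa_{z_i}(X)\kappa_{z_j}(X)\frac{z_i-z_j}{z_i+z_j}$ with the entry $M(\lambda)_{ij}$, tracking the alternating signs, the automatic truncation of the inner sum at $m=\lambda_j$ (since $q'_n=0$ for $n<0$), and the parity padding that makes $\Pf M(\lambda)$ well‑defined. Because the paper defines $Q'_\lambda$ via the Pfaffian of $M(\lambda)$, I would include this short verification rather than only citing it; it reproduces the classical computation behind Macdonald's III.(8.8).
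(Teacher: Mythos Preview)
Your proposal is correct and follows the same overall strategy as the paper: establish the $Q_\lambda$ specializations first and then transfer to $B_\lambda$ via $\omega$. The differences are in level of detail and in the route to the last identity. The paper simply cites $Q_\lambda(X;0)=S_\lambda$ and $Q_\lambda(X;-1)=Q'_\lambda$ as well-known, whereas you supply proofs (the Jacobi--Trudi determinant from the $t=0$ raising-operator formula, and the Schur Pfaffian computation for $t=-1$). For $B_\lambda(X;-1)=Q'_\lambda$, the paper argues $B_\lambda(X;-1)=\omega(Q_\lambda(X;-1))=\omega(Q'_\lambda)=Q'_\lambda$ using that $\omega$ acts as the identity on the subring $\Gamma$; you instead observe that $\alpha_z(X;-1)=\beta_z(X;-1)=\kappa_z$, so the full generating functions $\alpha_{z_1,z_2,\ldots}$ and $\beta_{z_1,z_2,\ldots}$ coincide at $t=-1$ and hence $Q_\lambda(X;-1)=B_\lambda(X;-1)$ before any identification with $Q'_\lambda$. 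These two arguments are equivalent---your parenthetical remark that $\omega(\kappa_z)=\kappa_z$ is precisely what makes $\omega|_\Gamma=\mathrm{id}$---so the difference is one of packaging: your version is more self-contained and verifies the Pfaffian matching with $M(\lambda)_{ij}$ explicitly, while the paper's is a two-line appeal to standard facts.
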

    
    \begin{proof}
        It is well-known the $Q_\lambda(X;0)$ specializes to $S_\lambda$. For the second identity, we have
        \begin{align*}
            B_\lambda(X;0)&=\omega Q_\lambda(X;0)\\
            &=\omega S_\lambda(X)\\
            &=S_{\lambda'}(X).
        \end{align*}
        Lastly, $Q'_\lambda(X)=Q_\lambda(X;-1)$ is well-known, and we get $Q'_\lambda(X)=B_\lambda(X;-1)$ by applying the involution $\omega$, since $\omega$ acts as the identity on the subring $\Gamma\subset\Lambda$.
    \end{proof}

    In particular, we have the following dual property of $Q_\lambda$ and $B_\lambda$.

    \begin{corollary}
        We have
        \begin{align*}
            Q_{(n)}(X;0)=B_{(1^n)}(X;0)&=h_n(X),\\
            B_{(n)}(X;0)=Q_{(1^n)}(X;0)&=e_n(X).
        \end{align*}
    \end{corollary}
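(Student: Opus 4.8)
The plan is to deduce everything directly from the preceding Proposition together with the classical specializations $S_{(n)}=h_n$ and $S_{(1^n)}=e_n$ recorded in the introduction, which themselves follow from the Jacobi--Trudi identity $S_\lambda=\det(h_{\lambda_i-i+j})=\det(e_{\lambda_i'-i+j})$: for a single row $\lambda=(n)$ the first determinant is just $h_n$, and for a single column $\lambda=(1^n)$ the second determinant is just $e_n$.

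First I would note the conjugate identities $(n)'=(1^n)$ and $(1^n)'=(n)$, which are immediate from the definition of conjugate partition. Then I would apply the Proposition termwise. For the first line: setting $t=0$ gives $Q_{(n)}(X;0)=S_{(n)}(X)=h_n(X)$ and $B_{(1^n)}(X;0)=S_{(1^n)'}(X)=S_{(n)}(X)=h_n(X)$, so the two agree. For the second line: $B_{(n)}(X;0)=S_{(n)'}(X)=S_{(1^n)}(X)=e_n(X)$ and $Q_{(1^n)}(X;0)=S_{(1^n)}(X)=e_n(X)$. This completes the argument.

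There is no real obstacle here; the statement is a direct corollary. The only point requiring a moment's care is keeping track of which of the two Jacobi--Trudi determinants collapses for a row versus a column shape, and correspondingly which of $Q$ or $B$ picks up a conjugate under the $t=0$ specialization. Everything else is substitution into the Proposition.
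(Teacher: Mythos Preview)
Your argument is correct. The paper's own proof is even terser and takes a slightly different route for the single-row specializations: rather than invoking the Proposition and Jacobi--Trudi to get $Q_{(n)}(X;0)=S_{(n)}=h_n$ and $B_{(n)}(X;0)=S_{(1^n)}=e_n$, it simply observes that the generating functions specialize as $\alpha_z(X;0)=\sigma_z(X)$ and $\beta_z(X;0)=\lambda_z(X)$, so that $q_n(X;0)=h_n$ and $b_n(X;0)=e_n$ directly, whence $Q_{(n)}(X;0)=h_n$ and $B_{(n)}(X;0)=e_n$. The column cases $B_{(1^n)}(X;0)=h_n$ and $Q_{(1^n)}(X;0)=e_n$ then follow from the first line of the Proposition, $Q_\lambda(X;0)=B_{\lambda'}(X;0)$, exactly as in your approach. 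So the two arguments differ only in whether the row cases are handled by the generating-function definitions or by passing through $S_{(n)}$ and $S_{(1^n)}$; both are immediate, and your version has the virtue of using only the Proposition, making the ``Corollary'' label fully literal.
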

    
    \begin{proof}
        This follows since $\alpha_z(X;0)=\sigma_z(X)$ and $\beta_z(X;0)=\lambda_z(X)$.
    \end{proof}

    To proceed further, we will need the decompositions of $q_n$ and $b_n$ into the monomial symmetric functions.

    \begin{proposition}\label{b_n monomials}
        For all $n\in\Z$, we have
        \begin{align}
            q_n&=\sum_{|\lambda|=n}(1-t)^{\ell(\lambda)}m_\lambda,\\
            b_n&=\sum_{|\lambda|=n}(-t)^{n-\ell(\lambda)}(1-t)^{\ell(\lambda)}m_\lambda,
        \end{align}
        where the sums range over partitions $\lambda$.
    \end{proposition}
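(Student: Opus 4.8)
The plan is to expand both generating functions as infinite products over the variables $x\in X$, after rewriting each single-variable factor as a geometric series, and then read off the coefficient of a monomial. For $n\le 0$ there is nothing to do: $\alpha_z$ and $\beta_z$ are power series in $z$ with no negative powers, so $q_n=b_n=0$ for $n<0$, while for $n=0$ only the empty partition contributes and both sides equal $m_{\emptyset}=1$. So assume $n\ge 1$.

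For $q_n$, write $1-txz=(1-xz)+(1-t)xz$, which gives the factorwise identity
\[
\frac{1-txz}{1-xz}=1+(1-t)\,\frac{xz}{1-xz}=1+(1-t)\sum_{k\ge 1}(xz)^k,
\]
hence $\alpha_z=\prod_{x\in X}\bigl(1+(1-t)\sum_{k\ge1}(xz)^k\bigr)$. Expanding this product, the coefficient of $z^n$ is a sum over all choices of finitely many distinct variables $x_{i_1},\dots,x_{i_r}$ and exponents $k_1,\dots,k_r\ge 1$ with $k_1+\cdots+k_r=n$, each such choice contributing $(1-t)^r\,x_{i_1}^{k_1}\cdots x_{i_r}^{k_r}$. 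Sorting the exponents of each such monomial into a partition $\lambda$ (so $|\lambda|=n$ and $\ell(\lambda)=r$) and collecting terms yields $q_n=\sum_{|\lambda|=n}(1-t)^{\ell(\lambda)}m_\lambda$; this identity is classical \cite{macdonald:1995}, and I would only reproduce it here for completeness and to mirror the $b_n$ computation.

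For $b_n$, the same trick gives $1+xz=(1+txz)+(1-t)xz$, so
\[
\frac{1+xz}{1+txz}=1+(1-t)\,xz\sum_{k\ge 0}(-txz)^k=1+(1-t)\sum_{m\ge 1}(-t)^{m-1}(xz)^m,
\]
and therefore $\beta_z=\prod_{x\in X}\bigl(1+(1-t)\sum_{m\ge1}(-t)^{m-1}(xz)^m\bigr)$. Repeating the expansion above, a choice of distinct variables $x_{i_1},\dots,x_{i_r}$ and exponents $k_1,\dots,k_r\ge 1$ with $\sum k_j=n$ now contributes $(1-t)^r\,(-t)^{(k_1-1)+\cdots+(k_r-1)}\,x_{i_1}^{k_1}\cdots x_{i_r}^{k_r}=(1-t)^r(-t)^{\,n-r}\,x_{i_1}^{k_1}\cdots x_{i_r}^{k_r}$. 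Collecting by the associated partition $\lambda$ gives $b_n=\sum_{|\lambda|=n}(-t)^{n-\ell(\lambda)}(1-t)^{\ell(\lambda)}m_\lambda$.

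There is no serious obstacle; the only point requiring care is the bookkeeping in the last step, namely that the total power of $-t$ accumulated from the chosen factors collapses to $n-\ell(\lambda)$ independently of which parts of $\lambda$ those factors produced. (One could instead derive the $b_n$ formula from the $q_n$ formula via $b_n[X]=(-1)^n q_n[-X]$, but that would require expanding $m_\lambda[-X]$ and is messier, so I would present the direct expansion.)
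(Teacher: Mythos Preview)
Your proposal is correct and follows essentially the same approach as the paper: both arguments rewrite each single-variable factor as $1+(1-t)\sum_{k\ge1}(xz)^k$ for $\alpha_z$ and $1+(1-t)\sum_{m\ge1}(-t)^{m-1}(xz)^m$ for $\beta_z$, then expand the infinite product and collect terms into monomial symmetric functions. The only cosmetic difference is that the paper reaches these factorizations by multiplying numerator and geometric series and regrouping, and then invokes the known expansion $h_n=\sum_{|\lambda|=n}m_\lambda$ as a shortcut, whereas you use the cleaner identities $1-txz=(1-xz)+(1-t)xz$ and $1+xz=(1+txz)+(1-t)xz$ and read off the monomial coefficients directly.
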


    \begin{proof}
        First, we expand out
        \begin{align*}
            \alpha_z&=\prod_{x\in X}\frac{1-txz}{1-xz}\\
            &=\prod_{x\in X}(1-txz)\sum_{n\geq0}(xz)^n\\
            &=\prod_{x\in X}\left(\sum_{n\geq0}(xz)^n-t\sum_{n\geq1}(xz)^n\right)\\
            &=\prod_{x\in X}\left(1+(1-t)\sum_{n\geq1}x^nz^n\right).
        \end{align*}
        Note that from (\ref{eqn: sigma_z}) that we have
        \begin{align*}
            \sigma_z=\prod_{x\in X}(1-zx)^{-1}=\prod_{x\in X}\left(1+\sum_{n\geq1}x^nz^n\right),
        \end{align*}
        and recall that $m_\lambda=\sum_{\mu} x^{\mu}$, where the sum ranges over distinct permutations of $\lambda$. Hence, we have that the coefficient of $z^n$ in $\sigma_z$ is $h_n=\sum_{|\lambda|=n}m_\lambda$ \cite[p.~21]{macdonald:1995}. So, compared to $\sigma_z$, the coefficient of $z^n$ in $\alpha_z$ has an additional factor of $(1-t)^{\ell(\lambda)}$ in each term.

        Similarly, we expand $\beta_z$ to get
        \begin{align*}
            \beta_z&=\prod_{x\in X}\frac{1+xz}{1+txz}\\
            &=\prod_{x\in X}(1+xz)\sum_{n\geq0}(-txz)^n\\
            &=\prod_{x\in X}\left(\sum_{n\geq0}(-txz)^n+\sum_{n\geq1}(-t)^{n-1}(xz)^n\right).
        \end{align*}
        Then, we can regroup terms to get
        \begin{align*}
            \beta_z&=\prod_{x\in X}\left(1+\sum_{n\geq1}\left((-tx)^n+(-t)^{n-1}x^n\right)z^n\right)\\
            &=\prod_{x\in X}\left(1+(1-t)\sum_{n\geq1}(-t)^{n-1}x^nz^n\right).
        \end{align*}
        Now, we can see that compared to $\alpha_z$, each term in the coefficient to $z^n$ has an additional factor of $(-t)^{n-\ell(\lambda)}$.
    \end{proof}

    \subsection{Inequivalence of $Q_{\lambda'}$ and $B_\lambda$}

    The relationship between $Q_\lambda$ and $B_\lambda$ can be described with the following commuting diagram,
    \[\begin{tikzcd}
    	{Q_\lambda} & {B_\lambda} \\
    	{S_\lambda} & {S_{\lambda'}}
    	\arrow["\omega", tail reversed, from=1-1, to=1-2]
    	\arrow["{t=0}"', from=1-1, to=2-1]
    	\arrow["{t=0}", from=1-2, to=2-2]
    	\arrow["\omega", tail reversed, from=2-1, to=2-2]
    \end{tikzcd}\]
    Since $\omega$ acts on the Schur functions by conjugating the index, i.e., $\omega(S_\lambda)=S_{\lambda'}$, it is natural to ask if this is also the case with the Hall-Littlewood functions. In other words, is $Q_{\lambda'}$ equal to $B_\lambda$? A simple example shows that these are not equal in general.

    \begin{example}
        Consider the partition $\lambda=(2)$, and let $X=\{x_1,x_2\}$. Using our definitions and computer algebra, we get
        \begin{align*}
            Q_{(1^2)}(X;t)&=\left(t^{3}-t^{2}-t+1\right)x_{1}x_{2},\\
            B_{(2)}(X;t)&=\left(t^{2}-t\right)x_{1}^{2}+\left(t^{2}-2\,t+1\right)x_{1}x_{2}+\left(t^{2}-t\right)x_{2}^{2},
        \end{align*}
        and hence $Q_{(1^2)}\neq B_{(2)}$. Similarly, we have
        \begin{align*}
            Q_{(2)}(X;t)&=\left(-t+1\right)x_{1}^{2}+\left(t^{2}-2\,t+1\right)x_{1}x_{2}+\left(-t+1\right)x_{2}^{2},\\
            B_{(1^2)}(X;t)&=\left(t^{3}-t^{2}-t+1\right)x_{1}^{2}+\left(t^{3}-t^{2}-t+1\right)x_{1}x_{2}+\left(t^{3}-t^{2}-t+1\right)x_{2}^{2}.
        \end{align*}
        Indeed, it is also clear that
        \begin{align*}
            Q_{(1^2)}&=(t^{3}-t^{2}-t+1)e_2,\\
            B_{(1^2)}&=(t^{3}-t^{2}-t+1)h_2
        \end{align*}
        are mapped to each other under the involution $\omega$. Similarly, we have 
        \begin{align*}
            Q_{(2)}&=(1-t)h_2+(t^2-t)e_2,\\
            B_{(2)}&=(1-t)e_2+(t^2-t)h_2.
        \end{align*}
    \end{example}

    \section{Vertex Operator Identity}

    Vertex operators can be used to construct the Schur functions \cite[p.~317]{kac:1990}, Schur's $Q$-functions \cite{jing:1991a}, and the Hall-Littlewood functions \cite{jing:1991b}. We can use these constructions to get useful identities in the language of symmetric functions. In particular, Schur functions and Schur's $Q$-functions have been shown via their determinantal formulas \cite{carre-thibon:1992,graf-jing:2024a} to satisfy the following vertex operator identities,
    \begin{align*}
        \sigma_z\lambda_{-1/z}^\perp S_\lambda&=\sum_{n\in\Z}S_{(n,\lambda)}z^n,\\
        \kappa_z\kappa_{-1/z}^\perp Q'_\lambda&=\sum_{n\in\Z}Q'_{(n,\lambda)}z^n,
    \end{align*}
    where $(n,\lambda):=(n,\lambda_1,\lambda_2,\ldots)$. It would be useful to have an analogous determinantal proof for Hall-Littlewood functions, but there is no suitable determinantal formula for these functions. Therefore, we will prove the analogous vertex operator identity for Hall-Littlewood functions with a generating function method.

    \subsection{Hall-Littlewood Vertex Operator}
    


        

    We proceed in a manner similar to \cite[p.~236-238]{macdonald:1995}. First, the following proposition provides useful formulas for computing the action of the adjoint.

    \begin{proposition}
        We have
        \begin{align*}
            q_k^\perp q_n&=
                \begin{cases}
                    q_n&\text{if }k=0,\\
                    (1-t)q_{n-k}&\text{if }k>0,
                \end{cases}&
            b_k^\perp q_n&=
                \begin{cases}
                    q_n&\text{if }k=0,\\
                    (-t)^{k-1}(1-t)q_{n-k}&\text{if }k>0,
                \end{cases}\\
            q_k^\perp b_n&=
                \begin{cases}
                    b_n&\text{if }k=0,\\
                    (-t)^{k-1}(1-t)b_{n-k}&\text{if }k>0,
                \end{cases}&
            b_k^\perp b_n&=
                \begin{cases}
                    b_n&\text{if }k=0,\\
                    (1-t)b_{n-k}&\text{if }k>0.
                \end{cases}
        \end{align*}
    \end{proposition}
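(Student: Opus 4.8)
The plan is to compute each of the four adjoint actions directly from the dual bases, using the monomial expansions from Proposition \ref{b_n monomials} together with the two inner products $(q_\lambda,m_\mu)=\delta_{\lambda\mu}$ and $(b_\lambda,f_\mu)=\delta_{\lambda\mu}$. The starting observation is that for a homogeneous $F$ of degree $k$, the adjoint $F^\perp$ lowers degree by $k$, so $q_k^\perp q_n$ and the like are homogeneous of degree $n-k$, hence a scalar multiple of $q_{n-k}$ (resp. $b_{n-k}$) — the only question is the scalar, and it vanishes when $n<k$. So in every case the real content is: given $n\ge k>0$, find the coefficient $c$ with $q_k^\perp q_n = c\,q_{n-k}$, etc.

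For the scalar, I would pair against the appropriate monomial-type function. For instance, to get $q_k^\perp q_n = c\,q_{n-k}$, pair both sides with $m_{(n-k)}$: the right side gives $c$, and the left side is $(q_k^\perp q_n, m_{(n-k)}) = (q_n, q_k\,m_{(n-k)})$. Now $q_k m_{(n-k)}$ can be expanded in the $q_\lambda$ basis, but it is cleaner to instead expand $q_n$ in monomials via Proposition \ref{b_n monomials} and use the pairing the other way: $(q_n, q_k m_{(n-k)})$. Actually the most economical route is to use the known identity \eqref{eqn: q_n^perp Q_lambda} restricted to one-row partitions, since $q_n = Q_{(n)}$: from $q_k^\perp Q_{(n)} = (1-t)Q_{(n)/(k)}$ for $k>0$, and the fact that the skew function $Q_{(n)/(k)}$ of a single row by a single row is just $Q_{(n-k)} = q_{n-k}$ (a one-row skew shape has a unique horizontal-strip filling), one gets $q_k^\perp q_n = (1-t)q_{n-k}$ immediately. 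Applying $\omega$, which is an isometry and sends $q\mapsto b$, gives $b_k^\perp b_n = (1-t)b_{n-k}$ for free.

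The two mixed cases $b_k^\perp q_n$ and $q_k^\perp b_n$ are adjoint to each other under $\omega$, so it suffices to do one, say $q_k^\perp b_n = c\, b_n$-multiple. Here I would pair against $f_{(n-k)}$: $(q_k^\perp b_n, f_{(n-k)}) = (b_n, q_k f_{(n-k)})$, which by $(b_\lambda, f_\mu)=\delta_{\lambda\mu}$ requires expanding $q_k f_{(n-k)}$ in the $b_\lambda$ basis and reading off the coefficient of $b_{(n)}$. Alternatively — and this is probably the slicker bookkeeping — expand $b_n = \sum_{|\lambda|=n}(-t)^{n-\ell(\lambda)}(1-t)^{\ell(\lambda)}m_\lambda$ and apply $q_k^\perp$ termwise: one needs the action $q_k^\perp m_\lambda$. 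Since $(q_k^\perp m_\lambda, q_\mu) = (m_\lambda, q_k q_\mu)$ and $q_k q_\mu$ has monomial expansion governed by Proposition \ref{b_n monomials}, the coefficient of $m_\lambda$ in $q_k q_\mu$ picks out which $\mu$ contribute; collecting these against the weights $(-t)^{n-\ell(\lambda)}(1-t)^{\ell(\lambda)}$ and comparing with the monomial expansion of $b_{n-k}$ yields the extra factor $(-t)^{k-1}(1-t)$. The arithmetic is: appending a part equal to $k$ to a partition of $n-k$ changes $\ell$ by $1$ and the exponent of $-t$ is adjusted by $k-1$, which is exactly the discrepancy $(-t)^{(n)-(n-k)-\text{(length change)}} = (-t)^{k-1}$ between the $b_n$ and $b_{n-k}$ weights, with one factor of $(1-t)$ surviving from $q_k^\perp$.

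**The main obstacle** is purely bookkeeping in the two mixed cases: one must carefully track how the length statistic $\ell(\lambda)$ and the power-of-$(-t)$ statistic interact when a monomial $m_\lambda$ appearing in $b_n$ is hit by $q_k^\perp$ and re-expanded, making sure the combinatorial identity ``$b_{n-k}$ with the right scalar'' falls out rather than a genuine sum over several $m$'s. I expect this to go through cleanly because, as noted, degree considerations already force the answer to be a single scalar multiple of $q_{n-k}$ or $b_{n-k}$, so once any one monomial coefficient (say that of $m_{(n-k)}$ or $m_{(1^{n-k})}$) is matched, the identity is proved; choosing the $m_{(1^{n-k})}$ coefficient, where the $(-t)$-exponent is maximal and the length is $n-k$, makes the power of $-t$ easiest to audit. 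As a sanity check I would verify all four formulas at $t=0$, where they must reduce to the classical $h_k^\perp h_n = h_{n-k}$, $e_k^\perp e_n = e_{n-k}$, $e_k^\perp h_n = 0$ for $k>0$ (indeed $(-t)^{k-1}(1-t)\to 0$), and $h_k^\perp e_n = 0$ for $k>0$, consistent with $\sigma_z,\lambda_z$ being ``dual'' generating functions.
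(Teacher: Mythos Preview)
Your handling of the unmixed cases matches the paper: cite \eqref{eqn: q_n^perp Q_lambda} for $q_k^\perp q_n$, then apply $\omega$ for $b_k^\perp b_n$. But your framing of the mixed cases contains a real gap: the assertion that ``degree considerations already force the answer to be a single scalar multiple of $q_{n-k}$ or $b_{n-k}$'' is false, since $\Lambda^{n-k}(t)$ has dimension $p(n-k)$, not~$1$. You therefore cannot conclude by matching a single monomial coefficient, which is the shortcut you fall back on in the ``main obstacle'' paragraph. Your alternative termwise route \emph{does} work once carried through honestly: from $(q_k^\perp m_\lambda,q_\mu)=(m_\lambda,q_{(k)\cup\mu})=\delta_{\lambda,(k)\cup\mu}$ one gets $q_k^\perp m_\lambda=m_{\lambda\setminus k}$ when $k$ is a part of $\lambda$ and $0$ otherwise (no appeal to Proposition~\ref{b_n monomials} is needed here), and then your length/exponent bookkeeping yields $(-t)^{k-1}(1-t)\,b_{n-k}$ as a \emph{full} identity of monomial expansions, the map $\mu\mapsto\mu\cup\{k\}$ being a bijection from partitions of $n-k$ to partitions of $n$ containing a part $k$. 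So the argument is salvageable once the degree shortcut is abandoned. (Your $t=0$ check is also slightly off: $(-t)^{k-1}(1-t)\big|_{t=0}=1$ for $k=1$, consistent with $e_1^\perp h_n=h_{n-1}$ since $e_1=h_1$; the vanishing holds only for $k\ge 2$.)

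For comparison, the paper takes a different and somewhat cleaner route for the mixed case: it computes $b_k^\perp q_n$ by pairing against an arbitrary $H$ and invoking the multiplicative identity $(q_n,GH)=\sum_{r+s=n}(q_r,G)(q_s,H)$ from \cite[p.~236]{macdonald:1995}, so that $(b_k^\perp q_n,H)=(q_k,b_k)\cdot(q_{n-k},H)$; the scalar $(q_k,b_k)=(-t)^{k-1}(1-t)$ is then read off directly from the $m_{(k)}$ term in Proposition~\ref{b_n monomials}. This bypasses any termwise computation of $q_k^\perp$ on monomials.
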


    \begin{proof}
        The first identity is an immediate consequence of \eqref{eqn: q_n^perp Q_lambda}. To compute $b_k^\perp q_n$, assume $k>0$. By the definition of $b_k^\perp$, we have
        \begin{align*}
            (b_k^\perp q_n,H)=(q_n,b_k H)
        \end{align*}
        for all $H\in\Lambda(t)$. Note that $(q_n,GH)=\sum_{r+s=n}(q_r,G)(q_s,H)$ for all $G,H$ \cite[p.~236]{macdonald:1995}, and so we have
        \begin{align*}
            (q_n,b_k H)=\sum_{r+s=n}(q_r,b_k)(q_s,H).
        \end{align*}
        From Proposition \ref{b_n monomials}, this is
        \begin{align*}
            (q_n,b_k H)=\sum_{r+s=n}\left(q_r,\sum_{|\lambda|=k}(-t)^{k-\ell(\lambda)}(1-t)^{\ell(\lambda)}m_\lambda\right)(q_s,H).
        \end{align*}
        Since the $q_\lambda$ and $m_\lambda$ are dual, we have that the first inner product is zero unless $\lambda=(k)=(r)$, and so $s=n-k$. Thus, we are left with
        \begin{align*}
            (q_n,b_k H)=(q_k,(-t)^{k-1}(1-t)^1m_k)(q_{n-k},H).
        \end{align*}
        Now, we can pull out coefficients in the first inner product, and use the fact that $(q_k,m_k)=1$, and so we get
        \begin{align*}
            (q_n,b_k H)=(-t)^{k-1}(1-t)(q_{n-k},H)=((-t)^{k-1}(1-t)q_{n-k},H).
        \end{align*}
        Thus, we have
        \begin{align*}
            (b_k^\perp q_n,H)=((-t)^{k-1}(1-t)q_{n-k},H).
        \end{align*}
        Since this is true for all $H\in\Lambda(t)$, we must have that $b_k^\perp q_n=(-t)^{k-1}(1-t)q_{n-k}$. Finally, we get the other two identities by applying $\omega$.
    \end{proof}

    Next, we compute the plethysm $q_m[z]$ and $b_m[z]$.

    \begin{proposition}
        We have
        \begin{align*}
            q_m[z]&=
                \begin{cases}
                    1&\text{if }m=0,\\
                    (1-t)z^m&\text{if }m>0,
                \end{cases}&
            b_m[z]&=
                \begin{cases}
                    1&\text{if }m=0,\\
                    (-t)^{m-1}(1-t)z^m&\text{if }m>0.
                \end{cases}
        \end{align*}
    \end{proposition}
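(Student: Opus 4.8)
The plan is to prove both formulas by a short direct computation, reusing the series expansions of $\alpha_z$ and $\beta_z$ already carried out in the proof of Proposition \ref{b_n monomials}. The key point is that plethystic substitution into a single-variable alphabet is nothing more than ordinary one-variable specialization, so $q_m[z]$ (resp. $b_m[z]$) is obtained by setting the alphabet equal to $\{z\}$ in $q_m$ (resp. $b_m$) and extracting coefficients with respect to a bookkeeping variable.

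Concretely, I would introduce a fresh bookkeeping variable $w$ and use the definition of the plethysm $\alpha_w[P]$ with $P=z$ to write $\sum_{m\in\Z}q_m[z]\,w^m=\alpha_w[z]=\frac{1-tzw}{1-zw}$. Expanding the geometric series exactly as in the proof of Proposition \ref{b_n monomials} gives $\frac{1-tzw}{1-zw}=1+(1-t)\sum_{k\geq1}z^kw^k$, and comparing coefficients of $w^m$ yields $q_0[z]=1$ and $q_m[z]=(1-t)z^m$ for $m>0$. Repeating the argument with $\beta_w[z]=\frac{1+zw}{1+tzw}=1+(1-t)\sum_{k\geq1}(-t)^{k-1}z^kw^k$, again using the expansion of $\beta_z$ from that same proof, and reading off the coefficient of $w^m$ gives $b_0[z]=1$ and $b_m[z]=(-t)^{m-1}(1-t)z^m$ for $m>0$.

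An equally quick alternative would be to invoke the monomial expansions of Proposition \ref{b_n monomials} directly: since $m_\lambda$ specializes in one variable to $z^{|\lambda|}$ when $\ell(\lambda)\leq1$ and to $0$ otherwise, the only partitions contributing to $q_m[z]$ or $b_m[z]$ are $\lambda=\emptyset$ (when $m=0$) and $\lambda=(m)$ (when $m>0$); substituting $\ell(\lambda)=1$ into the coefficients $(1-t)^{\ell(\lambda)}$ and $(-t)^{n-\ell(\lambda)}(1-t)^{\ell(\lambda)}$ then gives the claimed values immediately. Either route is essentially one line once the relevant expansion is in hand.

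There is no genuine obstacle here. The only thing to handle carefully in the write-up is the notational clash between $z$ used as the one-element plethystic alphabet and $z$ used as the bookkeeping variable in the definitions of $\alpha_z[P]$ and $\beta_z[P]$, which is why I would use a separate letter $w$ for the latter; and it is worth stating explicitly that plethysm into a single variable coincides with ordinary specialization, so that borrowing the expansions from the proof of Proposition \ref{b_n monomials} is legitimate.
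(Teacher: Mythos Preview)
Your proposal is correct and matches the paper's own proof, which is literally the one-line remark ``This is a straightforward expansion of $\alpha_w[z]$ and $\beta_w[z]$, where $w$ is another indeterminate.'' Your explicit computation of $\alpha_w[z]=\frac{1-tzw}{1-zw}$ and $\beta_w[z]=\frac{1+zw}{1+tzw}$, together with the care you take to use a separate bookkeeping variable $w$, is exactly what the paper intends but leaves unwritten.
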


    \begin{proof}
        This is a straightforward expansion of $\alpha_w[z]$ and $\beta_w[z]$, where $w$ is another indeterminate.
    \end{proof}

    Together, these previous propositions can be used to compute the actions of $\alpha_{\pm z}^\perp$ and $\beta_{\pm z}^\perp$ on $\alpha_w$ and $\beta_w$.
    
    \begin{proposition}\label{prop: action of adjoint}
        We have
        \begin{align*}
            \alpha_z^\perp(\alpha_w[X])&=\alpha_w[X]\alpha_w[z]=\alpha_w[X+z],&
            \beta_{z}^\perp (\alpha_w[X])&=\alpha_w[X]\beta_w[z],\\
            \alpha_z^\perp (\beta_w[X])&=\beta_w[X]\beta_w[z]=\beta_w[X+z],&
            \beta_{z}^\perp (\beta_w[X])&=\beta_w[X]\alpha_w[z],\\
            \alpha_{-z}^\perp (\alpha_w[X])&=\alpha_w[X]\alpha_{-w}[z],&
            \beta_{-z}^\perp (\alpha_w[X])&=\alpha_w[X]\beta_{-w}[z]=\alpha_w[X-z],\\
            \alpha_{-z}^\perp (\beta_w[X])&=\beta_w[X]\beta_{-w}[z],&
            \beta_{-z}^\perp (\beta_w[X])&=\beta_w[X]\alpha_{-w}[z]=\beta_w[X-z].
        \end{align*}
        In other words, $\alpha_w[X]$ and $\beta_w[X]$ are eigenvectors with respect to the operators $\alpha_{\pm z}^\perp$ and $\beta_{\pm z}^\perp$.
    \end{proposition}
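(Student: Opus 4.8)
The plan is to unwind each of the eight identities into formal power series in $z$ and $w$ and reduce it to the adjoint-action formulas for $q_k^\perp q_n$, $b_k^\perp q_n$, $q_k^\perp b_n$, $b_k^\perp b_n$ together with the plethysm values of $q_m[z]$ and $b_m[z]$ from the two preceding propositions. I start with the first identity: by the definition of the adjoint of a power series one has $\alpha_z^\perp=\sum_{k\geq 0}z^kq_k^\perp$, while $\alpha_w[X]=\sum_{n\geq 0}q_nw^n$, so applying the operator term by term gives
\[
\alpha_z^\perp(\alpha_w[X])=\sum_{n\geq 0}q_nw^n+(1-t)\sum_{k\geq 1}\sum_{n\geq k}z^kw^n\,q_{n-k}.
\]
Reindexing the inner sum by $m=n-k$ factors out $\alpha_w[X]$ and leaves the scalar $1+(1-t)\sum_{k\geq 1}(zw)^k=\frac{1-tzw}{1-zw}$, which is precisely $\alpha_w[z]$ (by the formula for $q_m[z]$, or simply by expanding $\frac{1-twz}{1-wz}$). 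Hence $\alpha_z^\perp(\alpha_w[X])=\alpha_w[X]\,\alpha_w[z]$, and the equality with $\alpha_w[X+z]$ is the plethystic multiplicativity $\alpha_w[A+B]=\alpha_w[A]\,\alpha_w[B]$ recorded in Section \ref{section: preliminaries}.

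The remaining seven identities follow from the same computation with two labor-saving observations. First, substituting $-z$ for $z$ as a formal bookkeeping variable converts $\alpha_z^\perp$ into $\alpha_{-z}^\perp$ and $\beta_z^\perp$ into $\beta_{-z}^\perp$; carrying this substitution through the right-hand side replaces $\alpha_w[z]$ by $\alpha_{-w}[z]$, $\beta_w[z]$ by $\beta_{-w}[z]$, and so on. Second, since $\omega$ is a $\Q(t)$-algebra isometry with $\omega(q_n)=b_n$, we have $(\omega F)^\perp=\omega\,F^\perp\omega$ and $\omega(\alpha_w[X])=\beta_w[X]$, so applying $\omega$ to the identity for $\alpha_z^\perp$ acting on $\alpha_w[X]$ yields the companion identity for $\beta_z^\perp$ acting on $\beta_w[X]$ (with the same scalar eigenvalue), and likewise the $\beta_z^\perp$-on-$\alpha_w[X]$ identity maps to the $\alpha_z^\perp$-on-$\beta_w[X]$ identity. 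Alternatively one verifies each case directly: using $b_k^\perp q_n$ or $q_k^\perp b_n$ the geometric series now has ratio $-tzw$ and sums to $\frac{1+zw}{1+tzw}=\beta_w[z]$, matching the stated right-hand sides, and the forms ``$=\alpha_w[X+z]$'' and ``$=\beta_w[X-z]$'' then follow from multiplicativity together with $\alpha_w[-z]=\beta_{-w}[z]$ and $\beta_w[-z]=\alpha_{-w}[z]$.

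The computation is otherwise routine; the main (and only real) obstacle is keeping the plethystic bookkeeping straight --- in particular, distinguishing the formal substitution $z\mapsto -z$ from the plethysm with the one-element alphabet $-z$ (which produces the reciprocal rational function), and correctly matching each resulting geometric sum to exactly one of $\alpha_{\pm w}[z]$ or $\beta_{\pm w}[z]$ so that the eigenvalue is named consistently. Once those identifications are fixed, most transparently via the closed forms $\alpha_w[z]=\frac{1-twz}{1-wz}$ and $\beta_w[z]=\frac{1+wz}{1+twz}$, the eigenvector statement is immediate: the eight formulas display $\alpha_w[X]$ and $\beta_w[X]$ as simultaneous eigenvectors of $\alpha_{\pm z}^\perp$ and $\beta_{\pm z}^\perp$ with eigenvalues in $\Q(t)[[zw]]$.
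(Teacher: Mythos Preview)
Your proposal is correct and follows essentially the same approach as the paper: expand $\alpha_z^\perp$ and $\alpha_w[X]$ as power series, apply the adjoint formulas $q_k^\perp q_n$ etc.\ from the preceding proposition, reindex to factor out $\alpha_w[X]$, and identify the remaining scalar series as $\alpha_w[z]$ via the plethysm values $q_m[z]$. The only cosmetic difference is that the paper writes out two base cases ($\alpha_z^\perp\alpha_w$ and $\alpha_z^\perp\beta_w$) before invoking $\omega$ and the identities $\alpha_z\beta_{-z}=1$, $\alpha_w[-z]=\beta_{-w}[z]$ to obtain the rest, whereas you do one base case and then use both the formal substitution $z\mapsto -z$ and $\omega$; your explicit warning about distinguishing that substitution from the plethystic alphabet $-z$ is a nice touch the paper leaves implicit.
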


    \begin{proof}
        We compute
        \begin{align*}
            \alpha_z^\perp \alpha_w[X]&=\sum_{m\in\Z}z^mq_m^\perp\sum_{n\in\Z}q_n[X]w^n\\
            &=\sum_{n\in\Z}w^n\sum_{m\in\Z}z^m q_m^\perp q_n[X]\\
            &=\sum_{n\in\Z}w^n\left(1+\sum_{m\geq1}z^m\cdot(1-t)q_{n-m}[X]\right)\\
            &=\sum_{n\in\Z}w^n\sum_{m\geq0}q_{n-m}[X]q_m[z]\\
            &=\sum_{i\in\Z}q_i[X]w^i\cdot \sum_{j\in\Z}q_j[z]w^j\\
            &=\alpha_w[X]\alpha_w[z].
        \end{align*}
        \begin{align*}
            \alpha_z^\perp \beta_w[X]&=\sum_{m\in\Z}z^mq_m^\perp\sum_{n\in\Z}b_n[X]w^n\\
            &=\sum_{n\in\Z}w^n\sum_{m\in\Z}z^m q_m^\perp b_n[X]\\
            &=\sum_{n\in\Z}w^n\left(1+\sum_{m\geq1}z^m\cdot(-t)^{m-1}(1-t)b_{n-m}[X]\right)\\
            &=\sum_{n\in\Z}w^n\sum_{m\geq0}b_{n-m}[X]b_m[z]\\
            &=\sum_{i\in\Z}b_i[X]w^i\cdot \sum_{j\in\Z}b_j[z]w^j\\
            &=\beta_w[X]\beta_w[z].
        \end{align*}
        The rest may be computed similarly, or may be obtained from the first two via the involution $\omega$ and the identities $\alpha_z\beta_{-z}=1$, $\alpha_z^\perp\beta_{-z}^\perp=1$, $\alpha_w[-z]=\beta_{-w}[z]$, and $\alpha_{-w}[z]=\beta_w[-z]$.
    \end{proof}

    Now, note that $\alpha_z^\perp$ is a ring homomorphism \cite[p.~236-237]{macdonald:1995}, and so $\beta_z^\perp$ is as well by an analogous argument. Consequently, if we view $\alpha_w$ and $\beta_w$ as multiplication operators, then we get the following commutation identities.
    \begin{align*}
            \alpha_z^\perp \alpha_w[X]&=\alpha_w[X]\alpha_w[z]\alpha_z^\perp,&
            \beta_{z}^\perp \alpha_w[X]&=\alpha_w[X]\beta_w[z]\beta_{z}^\perp,\\
            \alpha_z^\perp \beta_w[X]&=\beta_w[X]\beta_w[z]\alpha_z^\perp,&
            \beta_{z}^\perp \beta_w[X]&=\beta_w[X]\alpha_w[z]\beta_{z}^\perp,\\
            \alpha_{-z}^\perp \alpha_w[X]&=\alpha_w[X]\alpha_{-w}[z]\alpha_{-z}^\perp,&
            \beta_{-z}^\perp \alpha_w[X]&=\alpha_w[X]\beta_{-w}[z]\beta_{-z}^\perp,\\
            \alpha_{-z}^\perp \beta_w[X]&=\beta_w[X]\beta_{-w}[z]\alpha_{-z}^\perp,&
            \beta_{-z}^\perp \beta_w[X]&=\beta_w[X]\alpha_{-w}[z]\beta_{-z}^\perp.
        \end{align*}

    Thus, as an immediate consequence of Proposition \ref{prop: action of adjoint}, we have a very simple proof of actions of the vertex operator $\alpha_z\beta_{-1/z}^\perp$ and its dual $\beta_z\alpha_{-1/z}^\perp$.
    
    \begin{theorem}\label{thm: vertex operators}
        Let $\lambda$ be a composition, then
        \begin{align}
            \alpha_z\beta_{-1/z}^\perp Q_\lambda&=\sum_{n\in\Z}Q_{(n,\lambda)}z^n,\label{eqn: vertex op action1}\\
            \beta_z\alpha_{-1/z}^\perp B_\lambda&=\sum_{n\in\Z}B_{(n,\lambda)}z^n.\label{eqn: vertex op action2}
        \end{align}
    \end{theorem}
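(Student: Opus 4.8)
### Proof Proposal

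The plan is to prove \eqref{eqn: vertex op action1} by working directly with the generating function that defines the $Q_\lambda$'s, using the eigenvector/commutation identities from Proposition \ref{prop: action of adjoint} to intertwine the vertex operator with that generating function. Recall that $Q_\lambda$ is the coefficient of $z_1^{\lambda_1}z_2^{\lambda_2}\cdots$ in
\[
\alpha_{z_1,z_2,\ldots}=\prod_{i\geq1}\alpha_{z_i}\prod_{i<j}\frac{1-z_i^{-1}z_j}{1-tz_i^{-1}z_j}.
\]
So I would extract $Q_\lambda$ by iterated coefficient extraction and reduce the claim to showing that $\alpha_z\beta_{-1/z}^\perp$, applied to the whole generating series $\alpha_{z_1,z_2,\ldots}$, has the effect of prepending a new variable $z=z_0$ in front; more precisely, that
\[
\alpha_z\,\beta_{-1/z}^\perp\,\alpha_{z_1,z_2,\ldots}
=\prod_{i\geq1}\frac{1-z^{-1}z_i}{1-tz^{-1}z_i}\cdot\alpha_{z,z_1,z_2,\ldots}.
\]
Matching coefficients of $z^n z_1^{\lambda_1}z_2^{\lambda_2}\cdots$ on both sides, the left side gives the coefficient of $z^n$ in $\alpha_z\beta_{-1/z}^\perp Q_\lambda$, while the right side gives $Q_{(n,\lambda)}$, since the extra cross terms $\prod_i \frac{1-z^{-1}z_i}{1-tz^{-1}z_i}$ are exactly the $i=0,\ j\geq1$ factors needed to turn $\alpha_{z,z_1,z_2,\ldots}$ into the genuine generating function for $Q_{(n,\lambda)}$. (One must be slightly careful here that coefficient extraction and the infinite products are being manipulated in a ring where these operations make sense — treating everything as a formal expansion in the $z_i/z_j$ with $i<j$, as in \cite[p.~211]{macdonald:1995}.)

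The core computation is to pass $\beta_{-1/z}^\perp$ through the product $\prod_{i\geq1}\alpha_{z_i}$ (viewed as a multiplication operator) and see what it does to the "vacuum''. First, $\beta_{-1/z}^\perp(1)=1$ since $b_m^\perp 1=0$ for $m>0$ (as $b_m$ has positive degree) and $b_0=1$. Then, repeatedly applying the commutation identity $\beta_{-z}^\perp\alpha_w[X]=\alpha_w[X]\beta_{-w}[z]\beta_{-z}^\perp$ — here with $z\rightsquigarrow 1/z$ and $w\rightsquigarrow z_i$ — gives
\[
\beta_{-1/z}^\perp\prod_{i\geq1}\alpha_{z_i}
=\prod_{i\geq1}\alpha_{z_i}\cdot\prod_{i\geq1}\beta_{-z_i}[1/z]\cdot\beta_{-1/z}^\perp
=\prod_{i\geq1}\alpha_{z_i}\cdot\prod_{i\geq1}\frac{1-z^{-1}z_i}{1-tz^{-1}z_i}\cdot\beta_{-1/z}^\perp,
\]
using $\beta_{-w}[u]=\frac{1-uw}{1-tuw}$ from the plethystic expansion of $\beta_z[P]$. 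Since the cross-term factors $\frac{1-z_i^{-1}z_j}{1-tz_i^{-1}z_j}$ for $1\le i<j$ are scalars (not symmetric functions of $X$), the adjoint $\beta_{-1/z}^\perp$ passes through them untouched. Applying everything to $1$ and then multiplying on the left by $\alpha_z$ yields precisely the displayed identity above. I would also note that the $n\neq$ the "new first part'' components still produce honest $Q_{(n,\lambda)}$ for $n$ possibly $\le 0$ or violating $n\ge\lambda_1$, which is fine because the paper's conventions define $Q_\mu$ for all compositions $\mu$.

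The second identity \eqref{eqn: vertex op action2} follows by applying the involution $\omega$, which is an isometry (so $(\omega F)^\perp=\omega\, F^\perp\,\omega$ on the level of adjoints), together with $\omega(\alpha_z)=\beta_z$, $\omega(\beta_z)=\alpha_z$, and $\omega(Q_\lambda)=B_\lambda$; alternatively one reruns the same generating-function argument with the roles of $\alpha$ and $\beta$ swapped, using $\beta_{z_1,z_2,\ldots}$ in place of $\alpha_{z_1,z_2,\ldots}$ and the identity $\alpha_{-z}^\perp\beta_w[X]=\beta_w[X]\beta_{-w}[z]\alpha_{-z}^\perp$. I expect the main obstacle to be purely bookkeeping: justifying the interchange of the (infinite) coefficient-extraction operations with the adjoint operators and the infinite products, i.e.\ making precise the formal ring in which $\alpha_{z_1,z_2,\ldots}$ lives and checking that $\beta_{-1/z}^\perp$ acts coefficient-wise there. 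Once that formalism is pinned down (it is the same setup used to define $Q_\lambda$ in the first place), the algebra is a short and direct consequence of Proposition \ref{prop: action of adjoint}.
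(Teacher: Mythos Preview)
Your approach is essentially identical to the paper's: both apply the commutation identity $\beta_{-1/z}^\perp\alpha_{w_i}[X]=\alpha_{w_i}[X]\beta_{-w_i}[1/z]\beta_{-1/z}^\perp$ from Proposition~\ref{prop: action of adjoint} to move $\beta_{-1/z}^\perp$ past the product $\prod_i\alpha_{w_i}$, observe that the scalar cross terms $\beta_{-w_j}[1/w_i]$ are unaffected, and then read off coefficients of $w^\lambda$; the second identity is then obtained either by $\omega$ or by the parallel computation with $\beta$'s. One small slip: in the paper's notation $\alpha_{z,z_1,z_2,\ldots}$ \emph{already} contains the $i=0,\ j\ge 1$ cross factors $\beta_{-z_j}[1/z]=\frac{1-z^{-1}z_j}{1-tz^{-1}z_j}$, so your displayed identity
\[
\alpha_z\,\beta_{-1/z}^\perp\,\alpha_{z_1,z_2,\ldots}
=\prod_{i\geq1}\frac{1-z^{-1}z_i}{1-tz^{-1}z_i}\cdot\alpha_{z,z_1,z_2,\ldots}
\]
double-counts those factors; the correct statement (which your ``core computation'' paragraph actually derives) is simply $\alpha_z\,\beta_{-1/z}^\perp\,\alpha_{z_1,z_2,\ldots}=\alpha_{z,z_1,z_2,\ldots}$, exactly as the paper writes.
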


    \begin{proof}
        By iteration on the number of variables $w_1,w_2,\ldots$, we compute
        \begin{align*}
            \alpha_z[X]\beta_{-1/z}^\perp \alpha_{w_1,w_2,\ldots}[X]
            &=\alpha_z[X]\prod_{i\geq1}\beta_{-1/z}^\perp\alpha_{w_i}[X]\prod_{i<j}\beta_{-w_j}[1/w_i]\\
            &=\alpha_z[X]\prod_{i\geq1}\alpha_{w_i}[X]\beta_{-w_i}[1/z]\prod_{i<j}\beta_{-w_j}[1/w_i]\\
            &=\alpha_{z,w_1,w_2,\ldots}[X].
        \end{align*}
        Equating the coefficient of $w^\lambda$ at the beginning and ending of the chain of equalities gives the first result. Similarly, we have
        \begin{align*}
            \beta_z[X]\alpha_{-1/z}^\perp \beta_{w_1,w_2,\ldots}[X]
            &=\beta_z[X]\prod_{i\geq1}\alpha_{-1/z}^\perp\beta_{w_i}[X]\prod_{i<j}\beta_{-w_j}[1/w_i]\\
            &=\beta_z[X]\prod_{i\geq1}\beta_{w_i}[X]\beta_{-w_i}[1/z]\prod_{i<j}\beta_{-w_j}[1/w_i]\\
            &=\beta_{z,w_1,w_2,\ldots}[X].
        \end{align*}
    \end{proof}
    

    Now, let $H(z):=\alpha_z\beta_{-1/z}^\perp$, and define its homogeneous components $H_n$ by $H(z)=\sum_{n\in\Z}H_nz^n$. By \eqref{eqn: vertex op action1}, it follows that
    \[\left(\prod_{i\geq1}H(z_i)\right)(Q_\lambda)=\sum_{\mu}Q_{(\mu,\lambda)}z^\mu.\]
    Since $Q_{(0)}=1$ and $Q_{(\mu,0)}=Q_\mu$, we set $\lambda=0$ to get
    \[\left(\prod_{i\geq1}H(z_i)\right)(1)=\sum_{\mu}Q_{\mu}z^\mu=\alpha_{z_1,z_2,\ldots}.\]
    By equating the coefficients of $z^\mu$ for any composition $\mu\in\Z^n$, we get the usual presentation of the vertex operator \cite{jing:1991b},
    \[H_{\mu_1}\cdots H_{\mu_n}(1)=Q_\mu.\]
    Similarly, let $\overline{H}(z):=\beta_z\alpha_{-1/z}^\perp=\sum_{n\in\Z}\overline{H}_nz^n$, then
    \[\overline{H}_{\mu_1}\cdots \overline{H}_{\mu_n}(1)=B_\mu.\]
    We may equivalently write these equations as $t$-analogues of Bernstein operators,
    \begin{align*}
        \sum_{i\geq0}(-1)^iq_{n+i}b_i^\perp(Q_\lambda)&=Q_{(n,\lambda)},\\
        \sum_{i\geq0}(-1)^ib_{n+i}q_i^\perp(B_\lambda)&=B_{(n,\lambda)}.
    \end{align*}
    Specializing the operators with $t=0$, we recover the identities
    \begin{align*}
        \sum_{i\geq0}(-1)^ih_{n+i}e_i^\perp(S_\lambda)&=S_{(n,\lambda)},\\
        \sum_{i\geq0}(-1)^ie_{n+i}h_i^\perp(S_{\lambda'})&=S_{(n,\lambda)'}.
    \end{align*}
    Note that when $\lambda$ is a partition and $n\geq\lambda_1$ we have $(n,\lambda)'=1^n+\lambda'$, where the sum of two partitions is computed part-wise. Thus, we get the identity
    \begin{align*}
        \sum_{i\geq0}(-1)^ie_{n+i}h_i^\perp(S_{\lambda})&=S_{1^n+\lambda},
    \end{align*}
    for $n\geq\ell(\lambda)$. That is, a column of length $n$ has been appended to $\lambda$.
    
    Note that our indexing differs from both the original notation in \cite{jing:1991b} and from more modern conventions \cite{jing-liu:2022}. Most importantly, our dual operator $\overline{H}(z)$ is a slightly different operator than the dual Jing operator $H^*(z)$. To see this, first let $\Psi_z$ be the generating function of the power sum symmetric functions,
    \[\Psi_z:=\sum_{n\geq1}p_nz^{n-1}.\]
    Since we have
    \begin{align*}
        \Psi_z=\frac{\partial}{\partial z}\log \sigma_z,\qquad\text{and}\qquad \Psi_{-z}=\frac{\partial}{\partial z}\log \lambda_z,
    \end{align*}
    it follows that
    \[\sigma_z=\exp\left(\sum_{n\geq1}\frac{p_nz^n}{n}\right),\qquad\text{and}\qquad\lambda_z=\exp\left(-\sum_{n\geq1}\frac{p_n(-z)^n}{n}\right).\]
    Therefore, since $\alpha_z=\sigma_z/\sigma_{tz}$ and $\beta_z=\lambda_z/\lambda_{tz}$, we have
    \[\alpha_z=\exp\left(\sum_{n\geq1}\frac{1-t^n}{n}p_nz^n\right),\qquad\text{and}\qquad \beta_z=\exp\left(-\sum_{n\geq1}\frac{1-t^n}{n}p_n(-z)^n\right).\]
    It follows that
    \[\alpha_{-1/z}^\perp=\exp\left(\sum_{n\geq1}\frac{1-t^n}{n}p_n^\perp (-z)^{-n}\right),\qquad\text{and}\qquad \beta_{-1/z}^\perp=\exp\left(-\sum_{n\geq1}\frac{1-t^n}{n}p_n^\perp z^{-n}\right).\]
    And so, we have 
    \begin{align*}
        H(z)&=\exp\left(\sum_{n\geq1}\frac{1-t^n}{n}p_nz^n\right)\exp\left(-\sum_{n\geq1}\frac{1-t^n}{n}p_n^\perp z^{-n}\right),\\
        \overline{H}(z)&=\exp\left(-\sum_{n\geq1}\frac{1-t^n}{n}p_n(-z)^n\right) \exp\left(\sum_{n\geq1}\frac{1-t^n}{n}p_n^\perp (-z)^{-n}\right).
    \end{align*}
    It is easy to see that $H(z)$ is the same vertex operator as in \cite{jing-liu:2022}, whereas the dual Jing operator $H^*(z)$ satisfies $H^*(z)=\overline{H}(-z)$.



    \subsection{Other Vertex Operators}

    Furthermore, it is clear from Proposition \ref{prop: action of adjoint} and the proof of Theorem \ref{thm: vertex operators} that we may construct many more families of functions via operators similar to $\alpha_z\beta_{-1/z}^\perp$ and $\beta_z\alpha_{-1/z}^\perp$. In other words, these operators build up their corresponding generating functions. For example, the operator $\beta_z\beta_{-1/z}^\perp$ builds the generating function
    \[\prod_{i\geq1}\beta_{z_i}\prod_{i<j}\alpha_{-z_j}[1/z_i],\]
    and the operator $\alpha_z\alpha_{-1/z}^\perp$ builds the generating function
    \[\prod_{i\geq1}\alpha_{z_i}\prod_{i<j}\alpha_{-z_j}[1/z_i].\]
    
    Such functions may merit further study, as it remains to be seen if they exhibit similar useful properties as the Hall-Littlewood functions.

    \section{Stability Theorems}

    \subsection{Stability of Structure Coefficients}



    Now, we may use the vertex operator identity \eqref{eqn: vertex op action1} to prove a skew stability theorem. The following identities are necessary for the vertex operator method.

    First, the operators $\alpha_z^\perp$ and $\beta_{-z}^\perp$ act on a function $F\in\Lambda(t)$ via plethysm as follows.
    \begin{lemma}
        Let $F(A;t)\in\Lambda(t)$, then
        \begin{align*}
            \alpha_z^\perp F[X]&=F[X+z],\\
            \beta_{-z}^\perp F[X]&=F[X-z].
        \end{align*}
    \end{lemma}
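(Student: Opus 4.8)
The plan is to establish both identities for the generators $q_n$ and $b_n$ first, then extend to all of $\Lambda(t)$ by the multiplicativity of the relevant operators. Recall from Proposition \ref{prop: action of adjoint} that $\alpha_w[X]$ and $\beta_w[X]$ are eigenvectors of $\alpha_z^\perp$ and $\beta_{-z}^\perp$; the key observation is that the stated formulas are exactly the generating-function versions of what we want. Specifically, $\alpha_z^\perp\alpha_w[X]=\alpha_w[X]\alpha_w[z]=\alpha_w[X+z]$ says that $\alpha_z^\perp$ acts on the generating series $\sum_n q_n[X]w^n$ by sending it to $\sum_n q_n[X+z]w^n$, so extracting the coefficient of $w^n$ gives $\alpha_z^\perp q_n[X]=q_n[X+z]$. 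Likewise $\beta_{-z}^\perp\alpha_w[X]=\alpha_w[X]\beta_{-w}[z]=\alpha_w[X-z]$ yields $\beta_{-z}^\perp q_n[X]=q_n[X-z]$.

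First I would write $F(A;t)=\mathcal{F}(q_1,q_2,\ldots)$ as a polynomial in the $q_n$'s, which is legitimate since $\{q_n\}$ generates $\Lambda(t)$ freely. Then I would invoke the fact (stated in the excerpt, citing \cite[p.~236-237]{macdonald:1995}) that $\alpha_z^\perp$ is a ring homomorphism, hence so is $\beta_{-z}^\perp$ since $\beta_{-z}^\perp=(\alpha_z^\perp)^{-1}$ by \eqref{eqn: alpha_z=beta_-z^-1} and the induced identity $\alpha_z^\perp\beta_{-z}^\perp=1$ on adjoints. Therefore $\alpha_z^\perp F[X]=\mathcal{F}(\alpha_z^\perp q_1[X],\alpha_z^\perp q_2[X],\ldots)=\mathcal{F}(q_1[X+z],q_2[X+z],\ldots)=F[X+z]$, where the last equality uses that plethystic substitution $X\mapsto X+z$ is itself a ring homomorphism agreeing with $F[X]=\mathcal{F}(q_1[X],\ldots)$ under the substitution. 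The same argument with $\beta_{-z}^\perp$ and the identity $\beta_{-z}^\perp q_n[X]=q_n[X-z]$ gives the second formula.

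The main technical point to be careful about is the meaning of $F[X\pm z]$ as a formal object: here $z$ is an indeterminate, $X+z$ denotes the alphabet $X$ with the single letter $z$ adjoined, and $X-z$ is the virtual alphabet defined via $\sigma_w[X-z]=\sigma_w[X]/(1-wz)$, consistent with the plethystic conventions set up in section \ref{section: plethysm definitions}. One should check that the coefficient extraction in the eigenvector identities is valid term by term — this is fine because the identities in Proposition \ref{prop: action of adjoint} are equalities of formal power series in $w$ (with coefficients symmetric functions tensored with Laurent polynomials in $z$), so equating coefficients of $w^n$ is legitimate.

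I expect no serious obstacle here: the lemma is essentially a repackaging of Proposition \ref{prop: action of adjoint} once one notes that $\alpha_z^\perp$ and $\beta_{-z}^\perp$ are ring homomorphisms and that $\{q_n\}$ is a free generating set. The only thing requiring mild care is confirming that $\beta_{-z}^\perp$ is multiplicative; this follows either directly (an analogous computation to the one for $\alpha_z^\perp$ in \cite[p.~236-237]{macdonald:1995}, using Proposition \ref{b_n monomials} and the comultiplication formula $(b_n,GH)=\sum_{r+s=n}(b_r,G)(b_s,H)$) or formally from the fact that the inverse of a ring homomorphism, when it exists, is again a ring homomorphism.
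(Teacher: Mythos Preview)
Your proof is correct and follows essentially the same route as the paper: both invoke Proposition~\ref{prop: action of adjoint} together with the fact that $\alpha_z^\perp$ and $\beta_{-z}^\perp$ are ring homomorphisms, the only cosmetic difference being that the paper extracts the basis elements $Q_\lambda$ from the multi-variable generating series $\alpha_{z_1,z_2,\ldots}$ while you extract the generators $q_n$ from $\alpha_w$ and then extend multiplicatively. One small slip in your aside: $\sigma_w[X-z]=\sigma_w[X]\cdot(1-wz)$, not $\sigma_w[X]/(1-wz)$, since $\sigma_w[-z]=\sigma_w[z]^{-1}=1-wz$; this does not affect the argument.
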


    \begin{proof}
        It is sufficient to compute these for basis elements. From Proposition \ref{prop: action of adjoint} we have $\alpha_{z}^\perp\alpha_{z_1,z_2,\ldots}[X]=\alpha_{z_1,z_2,\ldots}[X+z]$, and so $\alpha_z^\perp Q_\lambda[X]=Q_\lambda[X+z]$. Similarly, we find that $\beta_{-z}^\perp Q_\lambda[X]=Q_\lambda[X-z]$.
    \end{proof}

    Next, we have an identity that allows one to separate a skew Hall-Littlewood function into a product of Hall-Littlewood functions.
    
    \begin{lemma}
        For all partitions $\lambda$ and integers $k,n\in\Z$ such that $k\geq0$ and $n>\lambda_1+k$, we have
        \[Q_{(n,\lambda)/(n-k)}=q_kQ_\lambda.\]
    \end{lemma}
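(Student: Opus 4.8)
The plan is to use the vertex operator identity \eqref{eqn: vertex op action1} together with the adjoint action formula \eqref{eqn: q_n^perp Q_lambda} and the $[X+z]$-expansion of skew functions. The key observation is that for a partition $\lambda$ with first part $\lambda_1$, when $n$ is large enough, the operator $\alpha_z\beta_{-1/z}^\perp$ acting on $Q_\lambda$ produces $Q_{(n,\lambda)}$ as a genuine partition index, so the skew operation $Q_\mu^\perp$ composed with this creation operator can be analyzed term by term in the $z$-expansion.

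First I would translate the skew function into an adjoint: since $Q_{(n,\lambda)/(n-k)}$ is characterized by $(Q_{(n,\lambda)/(n-k)}, Q_\nu) = (Q_{(n,\lambda)}, c_{(n-k)}(t) Q_{(n-k)} Q_\nu)$ for all $\nu$, and since $c_{(n-k)}(t) = 1-t$ for $n-k \geq 1$, this reduces (via \eqref{eqn: q_n^perp Q_lambda}) to showing $q_{n-k}^\perp Q_{(n,\lambda)} = (1-t) Q_{(n,\lambda)/(n-k)}$, hence it suffices to prove $q_{n-k}^\perp Q_{(n,\lambda)} = (1-t) q_k Q_\lambda$. Now I would write $\alpha_z^\perp$ in terms of the generating function: $\alpha_z^\perp = \sum_{m \geq 0} z^m q_m^\perp$ on positive-degree pieces, so the coefficient of $z^{n-k}$ in $\alpha_w^\perp Q_{(n,\lambda)}$-type expressions picks out $q_{n-k}^\perp$. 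The cleanest route is probably: apply $q_{n-k}^\perp$ directly to the generating-function identity $\alpha_z \beta_{-1/z}^\perp Q_\lambda = \sum_m Q_{(m,\lambda)} z^m$ and extract the coefficient of $z^n$ on the right after also hitting the left side with $q_{n-k}^\perp$. Since $q_{n-k}^\perp$ is a (degree-lowering) operator commuting appropriately, I can use the commutation relation $q_{n-k}^\perp \alpha_z[X] = \alpha_z[X] \alpha_z[z]^{?}$—more precisely, using the ring-homomorphism property of adjoints and Proposition \ref{prop: action of adjoint} in the form $\alpha_w^\perp(\alpha_z[X]) = \alpha_z[X]\alpha_z[w]$, I can compute $q_{n-k}^\perp$ of the product $\alpha_z[X] \cdot (\beta_{-1/z}^\perp Q_\lambda)$.

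The concrete mechanism: expand $q_{n-k}^\perp(\alpha_z[X] \cdot G)$ for $G = \beta_{-1/z}^\perp Q_\lambda$ using that $q_m^\perp$ is a ring homomorphism's adjoint, giving a Leibniz-type sum $\sum q_{j}^\perp(\alpha_z[X]) \cdot q_{n-k-j}^\perp(G)$; the factor $q_j^\perp \alpha_z[X] = \alpha_z[X] q_j[z] = (1-t) z^j \alpha_z[X]$ for $j > 0$ by the plethysm computation $q_j[z] = (1-t)z^j$. The point is that the degree of $Q_\lambda$ (hence of $G$) is fixed at $|\lambda|$, so for $n$ large the term $q_{n-k-j}^\perp(G)$ vanishes unless $j$ is nearly $n-k$; combined with the constraint $n > \lambda_1 + k$, only the term where $q_{n-k-j}^\perp$ acts on the top-degree-reducing part survives, and one tracks which power of $z$ it contributes. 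Extracting the coefficient of $z^n$ then forces $j$ and leaves exactly $(1-t) z^n \alpha_z[X]$ times a residual that reassembles into $q_k Q_\lambda$ once summed.

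The main obstacle I anticipate is handling the "mixing" term cleanly: the adjoint $q_{n-k}^\perp$ does not act diagonally on the product $\alpha_z[X] \cdot (\beta_{-1/z}^\perp Q_\lambda)$, so I need the co-multiplicative/Leibniz structure of $q^\perp$ (equivalently, the coproduct dual to multiplication, $(q_n, GH) = \sum_{r+s=n}(q_r,G)(q_s,H)$ from \cite[p.~236]{macdonald:1995}), and I must verify that the condition $n > \lambda_1 + k$ is exactly what kills all cross-terms except the desired one. An alternative, possibly slicker, route avoiding this: compute $q_{n-k}^\perp Q_{(n,\lambda)}$ by pairing against $Q_\nu$, use $(Q_{(n,\lambda)}, Q_{(n-k)} Q_\nu) = (Q_{(n,\lambda)}, Q_{((n-k),\nu)} + \text{lower } f\text{-terms})$ and the triangularity of the creation-operator expansion, but this reintroduces Pieri-type bookkeeping. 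I would go with the generating-function/plethysm approach since all the needed eigenvector identities are already established in Proposition \ref{prop: action of adjoint}, and the large-$n$ hypothesis makes the degree truncation automatic.
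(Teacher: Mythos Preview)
Your proposal takes a very different route from the paper's proof, and as sketched it has a genuine gap.

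The paper's argument is a one-line combinatorial observation: when $n>\lambda_1+k$, the skew shape $(n,\lambda)/(n-k)$ breaks into two pieces that share no edges or vertices, namely a single row of length $k$ (the surviving boxes of the first row, in columns $n-k+1,\dots,n$) and the shape $\lambda$ (in rows $2,3,\dots$, occupying columns $1,\dots,\lambda_1$). Since $n-k>\lambda_1$ there is at least one empty column between them. The Young tableau formula for skew Hall--Littlewood functions \cite[p.~229]{macdonald:1995} then factors over disconnected components, giving $Q_{(n,\lambda)/(n-k)} = Q_{(k)}\cdot Q_\lambda = q_k Q_\lambda$ immediately.

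Your vertex-operator approach is in principle workable, but the key step you rely on fails under the stated hypothesis. You argue that in the Leibniz expansion of $q_{n-k}^\perp\bigl(\alpha_z[X]\cdot G\bigr)$ with $G=\beta_{-1/z}^\perp Q_\lambda$, the terms $q_{n-k-j}^\perp(G)$ vanish by degree reasons for all but one value of $j$. But $G$ has $X$-degree $|\lambda|$, so this truncation requires $n-k>|\lambda|$, not merely $n-k>\lambda_1$. For a partition such as $\lambda=(2,2,2)$ with $k=0$ and $n=3$ the lemma applies (since $3>\lambda_1=2$), yet $n-k=3<|\lambda|=6$, so many cross-terms survive and your argument does not close. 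Thus your method, as written, would only establish the lemma under the stronger assumption $n>|\lambda|+k$; getting down to $n>\lambda_1+k$ would require a finer analysis that effectively rediscovers the combinatorial disconnection the paper exploits directly.
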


    \begin{proof}
        Suppose a skew diagram is in two disconnected parts, say $\lambda/\mu=\gamma\oplus\delta$, where the Young diagrams $\gamma$ and $\delta$ do not share any edges or vertices. Then it is clear from the Young tableau formula \cite[p.~229]{macdonald:1995} that $Q_{\lambda/\mu}=Q_\gamma\cdot Q_\delta$.
    \end{proof}

    Lastly, we need the following identity to isolate the stability of our sequence.

    \begin{lemma}[\cite{graf-jing:2024b}]\label{lem: separate}
        Let $H(z)\in\Q[t][z,z^{-1}]$ be a Laurent polynomial. Then
        \begin{equation*}
            \frac{H(z)}{(1-z)}=L(z)+\frac{c(t)}{1-z},
        \end{equation*}
        where $c(t)\in\Q[t]$, and $L(z)$ is a Laurent polynomial. If $H(z)\neq0$, then $L(z)$ has degree at most $\max(\deg(H)-1,0)$.
    \end{lemma}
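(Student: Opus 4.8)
The plan is to divide $H(z)$ by $1-z$ with remainder inside the Laurent polynomial ring $\Q[t][z,z^{-1}]$, exploiting the fact that every monomial $z^i$ is congruent to $1$ modulo $1-z$. First I would write $H(z)=\sum_{i=m}^{M}a_iz^i$ with $a_i\in\Q[t]$ and $m\le M$; assuming $H\ne 0$ we may take $a_M\ne 0$, so that $\deg H=M$. I would then set $c(t):=H(1)=\sum_{i=m}^{M}a_i$, which lies in $\Q[t]$ as a finite sum of elements of $\Q[t]$. (The case $H=0$ is trivial, with $L=0$ and $c(t)=0$.)

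The key step is an explicit factorization $H(z)-c(t)=(1-z)L(z)$. Since $H(z)-c(t)=\sum_i a_i(z^i-1)$, it is enough to factor each $z^i-1$: for $i\ge 1$ we have $z^i-1=-(1-z)(1+z+\cdots+z^{i-1})$; for $i=0$ the term is zero; and for $i\le -1$, writing $i=-k$, the identity $z^{-k}-1=z^{-k}(1-z^k)=z^{-k}(1-z)(1+z+\cdots+z^{k-1})$ shows $z^i-1=(1-z)q_i(z)$ with $q_i(z)=z^{i}(1+z+\cdots+z^{-i-1})\in\Q[t][z,z^{-1}]$. Taking $L(z):=\sum_i a_iq_i(z)$ then gives $H(z)-c(t)=(1-z)L(z)$, and dividing by $1-z$ produces the claimed identity $H(z)/(1-z)=L(z)+c(t)/(1-z)$.

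For the degree bound I would simply track the degrees of the $q_i$: $\deg q_i=i-1$ for $i\ge 1$, while $\deg q_i\le 0$ for $i\le 0$, so $\deg q_i\le\max(i-1,0)\le\max(M-1,0)$ for every $i$ with $m\le i\le M$. Hence $\deg L\le\max_{m\le i\le M}\deg q_i\le\max(M-1,0)=\max(\deg H-1,0)$ when $L\ne 0$, and the bound is vacuous otherwise.

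I do not expect a genuine obstacle. The only subtlety is to keep the quotient inside $\Q[t][z,z^{-1}]$ without introducing denominators in $t$ — this is why I carry out the division monomial by monomial (equivalently, dividing by the $z$-monic polynomial $z-1$ over the coefficient ring $\Q[t]$ never creates denominators), and why one must work in the Laurent ring rather than $\Q[t][z]$ to handle negative powers of $z$. If a cleaner exposition is wanted, one can instead observe that $z\mapsto 1$ induces a ring isomorphism $\Q[t][z,z^{-1}]/(1-z)\cong\Q[t]$, so $H(z)-H(1)$ automatically lies in $(1-z)\Q[t][z,z^{-1}]$, and then read off the degree bound by comparing top-degree terms in $(1-z)L(z)=H(z)-H(1)$.
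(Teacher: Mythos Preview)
Your argument is correct. The paper does not actually give a proof of this lemma: it is stated with a citation to \cite{graf-jing:2024b} and used as a black box in the proof of Theorem~\ref{thm: skew stability}. Your monomial-by-monomial division (equivalently, the observation that the evaluation $z\mapsto 1$ gives $\Q[t][z,z^{-1}]/(1-z)\cong\Q[t]$, so $H(z)-H(1)\in(1-z)\Q[t][z,z^{-1}]$) is the natural elementary proof, and your degree tracking via $\deg q_i=i-1$ for $i\geq 1$ and $\deg q_i\leq -1$ for $i\leq -1$ cleanly yields the bound $\max(\deg H-1,0)$.
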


    Finally, we can prove the following stability theorem for the product of Hall-Littlewood functions.

    \begin{theorem}\label{thm: skew stability}
        Let $\lambda,\mu,\nu$ be partitions, then
        \begin{align*}
            \sum_{m,n\in\Z}\left(Q_{(m,\lambda)},Q_\mu Q_{(n,\nu)}\right)z^m&=L(z)+\frac{c(t)}{1-z},
        \end{align*}
        where $L(z)$ is a Laurent polynomial (with coefficients in $\Q[t]$) of degree at most $\mu_1+\nu_1+|\lambda|-|\mu|-|\nu|$, and $c(t)\in\Q[t]$. Similarly, we have
        \begin{align*}
            \sum_{m,n\in\Z}\left(B_{(m,\lambda)},B_\mu B_{(n,\nu)}\right)z^m&=H(z)+\frac{k(t)}{1-z}.
        \end{align*}
    \end{theorem}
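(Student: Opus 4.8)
The strategy is to express the bilinear form $\left(Q_{(m,\lambda)},Q_\mu Q_{(n,\nu)}\right)$ in terms of adjoint operators and plethysm so that the double generating function collapses to a single rational function, then apply Lemma~\ref{lem: separate}. Using the vertex operator identity \eqref{eqn: vertex op action1} twice, one has $\sum_n Q_{(n,\nu)}z_2^n = H(z_2)Q_\nu$ and $\sum_m Q_{(m,\lambda)}z_1^m = H(z_1)Q_\lambda$ where $H(z)=\alpha_z\beta_{-1/z}^\perp$; so I would first rewrite the target sum as a coefficient extraction from something like $\bigl(H(z_1)Q_\lambda,\;Q_\mu\, H(z_2)Q_\nu\bigr)$ and use the adjointness to move $H(z_2)$ (or rather the multiplication/adjoint pieces of it) across the inner product. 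Since $\alpha_{z_2}^\perp F[X]=F[X+z_2]$ and $\beta_{-z_2}^\perp F[X]=F[X-z_2]$ by the Lemma preceding the statement, and since $q_k^\perp Q_\lambda$ and the skew rule from the two Lemmas above let one peel off a $q_k Q_\lambda$ factor once $n$ is large, the $n$-sum should telescope into a geometric-type series in $z$ (this is exactly where the factor $1/(1-z)$ comes from — after the dust settles only finitely many $n$ deviate from a periodic pattern).

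Concretely, the key steps in order: (1) apply \eqref{eqn: vertex op action1} to turn both families $Q_{(m,\lambda)}$ and $Q_{(n,\nu)}$ into vertex-operator images of $Q_\lambda,Q_\nu$; (2) use adjointness $(F^\perp G,K)=(G,FK)$ together with the ring-homomorphism property of $\alpha_z^\perp,\beta_z^\perp$ to push the adjoint half of $H(z_1)$ onto the right side, converting the whole expression into a single inner product of the form $\bigl(Q_\lambda,\;(\text{explicit }\alpha/\beta\text{-product in }z_1,z_2)\cdot Q_\mu Q_\nu\bigr)$ with the $z_2$-dependence entering through plethystic shifts $X\mapsto X\pm z_2$; (3) extract the coefficient of $z_2^n$, recognize via the two structural Lemmas (the disconnected-skew factorization $Q_{(n,\nu)/(n-k)}=q_kQ_\nu$ and \eqref{eqn: q_n^perp Q_lambda}) that for $n$ large the summand in $z_1$ (equivalently in $m$, after pairing against $Q_\mu$) stabilizes to a fixed Laurent polynomial in $z_1$ times a power of $z_2$; (4) sum the geometric tail to produce the $1/(1-z)$ term with $c(t)\in\Q[t]$, collect the finitely many exceptional terms into the Laurent polynomial $L(z)$, and finally invoke Lemma~\ref{lem: separate} to get the stated form and, crucially, the degree bound $\mu_1+\nu_1+|\lambda|-|\mu|-|\nu|$ on $L(z)$; the $B$-version is then immediate by applying $\omega$, since $\omega$ is an isometry carrying $Q$ to $B$ and $H(z)$ to $\overline H(z)$.

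I expect the main obstacle to be step (3)–(4): carefully tracking the degree in $z_1$ (hence the degree of $L(z)$) while summing over $n$, and in particular justifying that the $n$-summation really does produce only a simple pole at $z=1$ rather than something worse. This requires knowing precisely, via the Young-tableau/skew-factorization Lemma, at which threshold in $n$ (namely $n>\lambda_1+k$, or the appropriate analogue here involving $\mu_1,\nu_1$) the skew function $Q_{(n,\nu)/(\cdot)}$ splits off a $q$-factor, and then controlling the resulting finite sum's top degree. Degree bookkeeping in $z_1$ must be done via the weight/homogeneity grading: $\left(Q_{(m,\lambda)},Q_\mu Q_{(n,\nu)}\right)$ is nonzero only when $m+|\lambda|=|\mu|+n+|\nu|$, which pins the relation between $m$ and $n$ and is exactly what converts the two-variable sum into an honest single-variable Laurent series, with the $z_1$-degree of the relevant polynomial chunk bounded by $\mu_1+\nu_1$ (the largest part that can be "created") plus the weight defect $|\lambda|-|\mu|-|\nu|$. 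Once this bound is in hand the appeal to Lemma~\ref{lem: separate} with $\deg H$ controlled is routine.
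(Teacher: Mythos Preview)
Your plan is correct and uses the same ingredients as the paper's proof---the vertex operator identity \eqref{eqn: vertex op action1}, adjointness to move $\alpha_z$ across the inner product, the plethystic shift $\alpha_z^\perp F=F[X+z]$, the disconnected-skew factorization $Q_{(n,\nu)/(n-j)}=q_jQ_\nu$ for large $n$, summation of the geometric tail, and Lemma~\ref{lem: separate}---and your degree accounting via the homogeneity constraint $m=n+|\mu|+|\nu|-|\lambda|$ is exactly how the paper gets the bound $\mu_1+\nu_1+|\lambda|-|\mu|-|\nu|$.

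The one difference is organizational: the paper applies \eqref{eqn: vertex op action1} only to the $m$-sum, keeping $n$ as a free index throughout, and then expands $\alpha_z^\perp Q_{(n,\nu)}=Q_{(n,\nu)}[X+z]=\sum_i Q_{(n,\nu)/(i)}q_i[z]$ directly. You instead propose applying the vertex operator to \emph{both} families, introducing a second variable $z_2$, and then extracting the coefficient of $z_2^n$---which simply lands you back at the paper's expression $(\beta_{-1/z}^\perp Q_\lambda,\alpha_z^\perp Q_\mu Q_{(n,\nu)})$. So the $z_2$ detour is harmless but unnecessary; you can skip step~(1) for the $\nu$-side and go straight to the skew expansion in step~(3). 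The $B$-version follows, as you say, by applying the isometry $\omega$.
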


    \begin{proof}
        It suffices to show only the first identity. Let $f(z)=\sum_{m,n\in\Z}\left(Q_{(m,\lambda)},Q_\mu Q_{(n,\nu)}\right)z^m$, then by \eqref{eqn: vertex op action1}, we have
        \begin{align*}
            f(z)&=\sum_{n\in\Z}\left(\sum_{m\in\Z}Q_{(m,\lambda)}z^m,Q_\mu Q_{(n,\nu)}\right)\\
            &=\sum_{n\in\Z}\left(\alpha_z\beta_{-1/z}^\perp Q_\lambda,Q_\mu Q_{(n,\nu)}\right)\\
            &=\sum_{n\in\Z}\left(\beta_{-1/z}^\perp Q_\lambda,\alpha_z^\perp Q_\mu Q_{(n,\nu)}\right).
        \end{align*}
        Now, we have $\alpha_z^\perp(Q_\mu Q_{(n,\nu)})=Q_\mu[X+z]Q_{(n,\nu)}[X+z]$. Note that $\beta_{-1/z}^\perp Q_\lambda=Q_\lambda[X-1/z]$ is a linear combination of terms of weight at most $|\lambda|$. Thus, in the expansion
        \begin{align*}
            \alpha_z^\perp Q_{(n,\nu)}&=Q_{(n,\nu)}[X+z]=\sum_{i\geq0}Q_{(n,\nu)/(i)}[X]q_i[z],
        \end{align*}
        we only need to consider terms where $|\lambda|\geq |\mu|+n+|\nu|-i$, i.e., where $i\geq n-(|\lambda|-|\mu|-|\nu|)$. Let $r=|\lambda|-|\mu|-|\nu|$, so that
        \begin{align*}
            f(z)&=\sum_{n\in\Z}\left(\beta_{-1/z}^\perp Q_\lambda,\alpha_z^\perp Q_\mu \sum_{i=n-r}^nQ_{(n,\nu)/(i)}q_i[z]\right).
        \end{align*}
        We write $f(z)=L(z)+T(z)$, where
        \begin{align*}
            L(z)&=\sum_{n\leq r}\left(\beta_{-1/z}^\perp Q_\lambda,\alpha_z^\perp Q_\mu Q_{(n,\nu)}\right),
        \end{align*}
        and
        \begin{align*}
            T(z)&=\sum_{n>r}\left(\beta_{-1/z}^\perp Q_\lambda,\alpha_z^\perp Q_\mu \sum_{i=n-r}^nQ_{(n,\nu)/(i)}q_i[z]\right).
        \end{align*}
        From \eqref{eqn: raising operator formulas}, it is clear that $Q_{(n,\nu)}=0$ if $n<-|\nu|$. Hence, $L(z)$ is a Laurent polynomial of degree at most $\nu_1+\mu_1+r$. Next, we may reindex the inner sum in $T(z)$ with $i\mapsto n-j$, and so we get
        \begin{align*}
            T(z)&=\sum_{n>r}\left(\beta_{-1/z}^\perp Q_\lambda,\alpha_z^\perp Q_\mu \sum_{j=0}^rQ_{(n,\nu)/(n-j)}q_{n-j}[z]\right).
        \end{align*}
        Note that $n-j\geq n-r>0$, and so $Q_{(n,\nu)/(n-j)}=q_jQ_\nu$ and $q_{n-j}[z]=(1-t)z^{n-j}$. Thus, we get
        \begin{align*}
            T(z)&=\sum_{n>r}\left(\beta_{-1/z}^\perp Q_\lambda,\alpha_z^\perp Q_\mu \sum_{j=0}^rq_jQ_\nu(1-t)z^{n-j}\right)\\
            &=\sum_{n>r}z^n\left(Q_\lambda[X-1/z],Q_\mu[X+z] \sum_{j=0}^rq_jQ_\nu(1-t)z^{-j}\right)\\
            &=\frac{z^{r+1}}{1-z}\cdot H(z),
        \end{align*}
        where $H(z)$ is a Laurent polynomial of degree at most $\mu_1$. Then, by Lemma \ref{lem: separate} we have
        \begin{align*}
            T(z)&=\frac{c(t)}{1-z}+K(z),
        \end{align*}
        where $K(z)$ is a Laurent polynomial of degree at most $\mu_1+r$.
    \end{proof}

    In other words, Theorem \ref{thm: skew stability} states that the sequences
    \begin{align*}
        (Q_{(m,\lambda)},Q_\mu Q_{(n,\nu)})&\qquad m\in\Z,n=m+|\lambda|-|\mu|-|\nu|,\\
        (B_{(m,\lambda)},B_\mu B_{(n,\nu)})&\qquad m\in\Z,n=m+|\lambda|-|\mu|-|\nu|,
    \end{align*}
    stabilize for large enough $m$. These sequences may equivalently be written as
    \begin{align*}
        (Q_{(m,\lambda)/\mu},Q_{(n,\nu)})&\qquad m\in\Z,n=m+|\lambda|-|\mu|-|\nu|,\\
        (B_{(m,\lambda)/\mu},B_{(n,\nu)})&\qquad m\in\Z,n=m+|\lambda|-|\mu|-|\nu|.
    \end{align*}

    \subsection{Stability of Hall Polynomials}

    For partitions $\lambda,\mu,\nu$, the Hall polynomial $g_{\mu\nu}^\lambda(t)$ and the coefficient $f_{\mu\nu}^\lambda(t)=(Q_{\lambda/\mu},Q_\nu)$ are related by the identity
    \begin{equation*}
        g_{\mu\nu}^\lambda(t)=t^{\varepsilon(\lambda)-\varepsilon(\mu)-\varepsilon(\nu)}f_{\mu\nu}^\lambda(t^{-1}),
    \end{equation*}
    where $\varepsilon(\lambda):=\sum_{i\geq1}\binom{\lambda_i'}{2}$ \cite[p.~217]{macdonald:1995}. Consider a partition of the form $\mu=(m,\lambda)$, where $m\geq\lambda_1$. Since $\mu_i'=\lambda_i'+1$ for all $1\leq i\leq m$, we have
    \begin{align*}
        \varepsilon(\mu)&=\sum_{i=1}^{\lambda_1}\binom{\lambda_i'+1}{2}+\sum_{i=\lambda_1+1}^m\binom{1}{2}=\sum_{i=1}^{\lambda_1}\binom{\lambda_i'+1}{2}.
    \end{align*}
    Hence $\varepsilon((m,\lambda))$ is constant for all $m\geq\lambda_1$, and in fact
    \begin{align*}
        \varepsilon((m,\lambda))&=\sum_{i=1}^{\lambda_1}\left[\binom{\lambda_i'}{1}+\binom{\lambda_i'}{2}\right]=|\lambda|+\varepsilon(\lambda)
    \end{align*}
    since $\binom{n}{k}=\binom{n-1}{k-1}+\binom{n-1}{k}$. Consequently, the stability of Hall polynomials follows from Theorem \ref{thm: skew stability}.
    \begin{theorem}
        For partitions $\lambda,\mu,\nu$, the following sequence of Hall polynomials stabilizes, 
        \[g_{\mu(n,\nu)}^{(m,\lambda)}(t),\qquad m\geq\lambda_1,~n=m+|\lambda|-|\mu|-|\nu|.\]
    \end{theorem}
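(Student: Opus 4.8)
The plan is to combine the stability result of Theorem~\ref{thm: skew stability} with the explicit computation of $\varepsilon((m,\lambda))$ just carried out, together with the relation $g_{\mu\nu}^\lambda(t)=t^{\varepsilon(\lambda)-\varepsilon(\mu)-\varepsilon(\nu)}f_{\mu\nu}^\lambda(t^{-1})$. First I would fix the linkage $n=m+|\lambda|-|\mu|-|\nu|$ and set $\lambda^{(m)}=(m,\lambda)$, $\nu^{(n)}=(n,\nu)$, so that we are studying $g_{\mu\,\nu^{(n)}}^{\lambda^{(m)}}(t)=t^{\varepsilon(\lambda^{(m)})-\varepsilon(\mu)-\varepsilon(\nu^{(n)})}\,f_{\mu\,\nu^{(n)}}^{\lambda^{(m)}}(t^{-1})$.

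Next I would observe that for $m\geq\lambda_1$ and correspondingly $n\geq\nu_1$ (which holds once $m$ is large, since $n=m+|\lambda|-|\mu|-|\nu|$ grows with $m$), the computation above gives $\varepsilon((m,\lambda))=|\lambda|+\varepsilon(\lambda)$ and $\varepsilon((n,\nu))=|\nu|+\varepsilon(\nu)$, both independent of $m$ and $n$. Hence the exponent $\varepsilon(\lambda^{(m)})-\varepsilon(\mu)-\varepsilon(\nu^{(n)})=|\lambda|+\varepsilon(\lambda)-\varepsilon(\mu)-|\nu|-\varepsilon(\nu)$ is a constant, call it $E$, for all sufficiently large $m$. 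Therefore $g_{\mu\,\nu^{(n)}}^{\lambda^{(m)}}(t)=t^{E}\,f_{\mu\,\nu^{(n)}}^{\lambda^{(m)}}(t^{-1})$ for all large $m$.

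It then remains to deduce that the sequence $f_{\mu\,\nu^{(n)}}^{\lambda^{(m)}}(t^{-1})$ is eventually constant in $m$. By definition $f_{\mu\,\nu^{(n)}}^{\lambda^{(m)}}(t)=(Q_{\lambda^{(m)}/\mu},Q_{\nu^{(n)}})=(Q_{(m,\lambda)},c_\mu(t)Q_\mu Q_{(n,\nu)})=c_\mu(t)(Q_{(m,\lambda)},Q_\mu Q_{(n,\nu)})$, so this is $c_\mu(t)$ times the $z^m$-coefficient of the series in Theorem~\ref{thm: skew stability}. Since that series equals $L(z)+c(t)/(1-z)$ with $L$ a Laurent polynomial, its coefficient of $z^m$ equals the constant $c(t)$ for all $m$ exceeding $\deg L$. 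Thus $f_{\mu\,\nu^{(n)}}^{\lambda^{(m)}}(t)$ is independent of $m$ once $m$ is large, and substituting $t^{-1}$ for $t$ and multiplying by the (now constant) power $t^{E}$ shows $g_{\mu\,\nu^{(n)}}^{\lambda^{(m)}}(t)$ stabilizes.

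The one point requiring a little care — and the main (mild) obstacle — is bookkeeping the two separate largeness thresholds: we need $m\geq\lambda_1$ for the $\varepsilon$-formula on the numerator partition, we need $n\geq\nu_1$ for the $\varepsilon$-formula on $\nu^{(n)}$, and we need $m>\deg L(z)$ for the structure-coefficient stability; since $n$ is an affine increasing function of $m$, all three are satisfied simultaneously for $m$ beyond a single explicit bound, so the conclusion holds for all such $m$. I would state this bound (or simply say ``for $m$ sufficiently large'') and conclude.
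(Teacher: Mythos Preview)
Your proposal is correct and follows essentially the same route as the paper: the paper reduces the Hall polynomial stability to Theorem~\ref{thm: skew stability} via the relation $g_{\mu\nu}^\lambda(t)=t^{\varepsilon(\lambda)-\varepsilon(\mu)-\varepsilon(\nu)}f_{\mu\nu}^\lambda(t^{-1})$ together with the observation that $\varepsilon((m,\lambda))=|\lambda|+\varepsilon(\lambda)$ is constant for $m\geq\lambda_1$, exactly as you do. Your write-up is in fact more detailed than the paper's (which simply says ``Consequently, the stability of Hall polynomials follows from Theorem~\ref{thm: skew stability}''), in particular your explicit handling of the three thresholds and the $c_\mu(t)$ factor linking $f_{\mu\nu}^\lambda$ to the inner product of Theorem~\ref{thm: skew stability}.
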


    \section*{Acknowledgment}

    Thank you to Naihuan Jing for introducing me to this wonderful topic, and for offering me invaluable guidance along the way.

    \bibliographystyle{alpha}
    \bibliography{references}

    \bigskip
    \bigskip

    
    
    
    

\end{document}